\xpretocmd{\@maketitle}{\let\@makefntext\BHFN@OldMakefntext}{}{}
\renewcommand\@makefntext[1]{%
  \@ifundefined{@makefnmark}
    {}
    {%
     \renewcommand\@makefnmark{%
       \mbox{%
         \textsuperscript{%
           \normalfont
           \hyperref[\BackrefFootnoteTag]{\@thefnmark}%
         }%
       }\,%
     }%
     \BHFN@OldMakefntext{#1}%
  }%
}
\newenvironment{proof-}{\noindent{\bf Proof}}{\par\bigskip}
\newtheorem{Theorem}{Theorem}[section]
\newtheorem{theorem}{Theorem}[section]
\newtheorem{lemma}[Theorem]{Lemma}
\newtheorem{claim}[Theorem]{Claim}
\newtheorem{corollary}[Theorem]{Corollary}
\newtheorem{observation}[Theorem]{Observation}
\theoremstyle{definition}
\newtheorem{definition}[Theorem]{definition}
\newtheorem{notation}[Theorem]{Notation}
\newtheorem{remark}[Theorem]{Remark}
\newcommand{\hgt}{{\rm ht}}
\newcommand{\lev}{{\rm lev}}
\newcommand{\into}{\rightarrow}
\newcommand{\rest}{\upharpoonright}  
\newcommand{\DD}{{\mathcal D}}
\newcommand{\EE}{{\mathcal E}}
\newcommand{\FF}{{\mathcal F}}
\newcommand{\HH}{{\mathcal H}}
\newcommand{\LL}{{\mathcal L}}
\newcommand{\TT}{{\mathcal T}}
\def\mathunderaccent#1#2 {\let\theaccent#1\skewfactor#2
\mathpalette\putaccentunder}
\def\putaccentunder#1#2{\oalign{$#1#2$\crcr\hidewidth
\vbox to.2ex{\hbox{$#1\skew\skewfactor\theaccent{}$}\vss}\hidewidth}}
\newcommand{\dom}{\mbox{\rm dom}}
\newcommand{\ran}{\mbox{\rm ran}}
\author{Mirna D\v zamonja}
\address{Institut d'Histoire et de Philosophie des Sciences et des Techniques,
CNRS-Universit\' e Paris 1 Panth{\'e}on-Sorbonne,
13 Rue de Four, 75006 Paris, France
and
Institut of Mathematics,
Czech Academy of Sciences,
{\v Z}itn{\'a} 25, 115 67 Prague, Czech Republic}\email{mirna.dzamonja@univ-paris1.fr, https://www.logiqueconsult.eu}
\author{Saharon Shelah}
\address{Department of Mathematics, Hebrew University of Jerusalem, 91904 Givat Ram, Israel}
\email{shelah@math.huji.ac.il, http://shelah.logic.at}
\keywords{wide Aronszajn tree, Martin Axiom, universality}
\subjclass{03E05, 03E35, 03E50}
\title{On wide Aronszajn trees in the presence of MA}
\begin{document}
\maketitle

\begin{abstract} A wide Aronszajn tree is a tree of size and height $\omega_1$ with no uncountable branches. We prove that under $MA(\omega_1)$ there is no wide Aronszajn tree which is universal under weak embeddings. This solves an open question of Mekler and
V\"a\"an\"anen from 1994. 

We also prove that under $MA(\omega_1)$, every wide Aronszajn tree weakly embeds in an Aronszajn tree, which combined with a result of Todor{\v c}evi{\'c} from 2007, gives that under  $MA(\omega_1)$ every wide Aronszajn tree embeds into a Lipschitz tree or a coherent tree. We also prove that under $MA(\omega_1)$  there is no wide Aronszajn tree which weakly embeds all Aronszajn trees, improving the result in the first paragraph as well as a result of Todor{\v c}evi{\'c}  from 2007 who proved that under $MA(\omega_1)$ there are no universal Aronszajn trees.  
\end{abstract}

\footnote{Mirna D\v zamonja's research was supported by the GA{\v C}R project EXPRO 20-31529X and RVO: 67985840. She thanks the University of East of Anglia where she was Professor when this research was done and Hebrew University of Jerusalem for their hospitality in April 2019. Saharon Shelah thanks the Israel Science Foundation for their grant 1838/19 and the European Research Council for their grant 338821. The authors thank Tanmay Inamdar for many very constructive comments and the anonymous referee for a prompt and careful reading and, moreover, providing them with the enclosed diagrams in Latex. They would also like to acknowledge that after reading this preprint, Stevo Todor{\v c}evi{\'c} informed them that upon closer inspection one can see that  his arguments regarding Lipschitz trees in \cite{To} reveal that the fact that the trees have countable levels is not used, and therefore one could use 
 \cite{To} combined with \cite{stevo2007walks} to solve the Mekler-V\"a\"an\"anen problem.
 
This is publication number 1186 in Shelah's publication list.
}

\section{Introduction}
We study the class $\TT$ of trees of height and size $\aleph_1$, but with no uncountable branch. We call such trees {\em wide Aronszajn trees}.
A particular instance of such a tree is a classical Aronszajn tree, so the class $\mathcal A$ of Aronszajn trees satisfies 
$\mathcal A\subseteq \TT$. Apart from their intrinsic interest in combinatorial set theory, these classes are also interesting from the topological point of view, since they give rise to a natural generalisations of metric spaces, 
$\omega_1$-metric spaces introduced by Sikorski in \cite{SikorskiDelta} and further studied in \cite{MekVa}, \cite{To} or \cite{firstwithJouko}, for example. The $\omega_1$-distance function in trees is given by the $\Delta$-function,
which is defined by $\Delta(x,y)=\hgt(x\cap_T y)$ for $x\neq y$ and $\Delta(x,x)=0$. Here $x\cap_T y$ represents the meet in the tree (as our trees will be trees of sequences of ordinals, this notation is more natural than $\wedge_T$).
Classes $\TT$ and $\mathcal A$ can be quasi-ordered using the notion of {\em weak embedding}, which is defined as follows:

\begin{definition}\label{} For two trees $T_1, T_2$, we say that $T_1$ is {\em weakly embeddable} in $T_2$
and we write $T_1\le T_2$,
if there is $f:\,T_1\to T_2$ such for all $x, y \in T_1$
\[x <_{T_1} y\implies f(x) <_{T_2} f(y).
\]
\end{definition}

We are interested in the structure of $(\TT, \le)$ and  $(\mathcal A, \le)$. In particular, we address the question of the
existence of a universal element in these classes. This is of special interest since among the many interesting and correct results of the paper \cite{MekVa}
from 1993 there is also a claim that $MA(\omega_1)$ implies that there is a universal element in $(\TT, \le)$, the argument for which was soon after found to be faulty. Ever since, the status of the possible
existence of a universal element in  $(\TT, \le)$ under $MA(\omega_1)$ has remained an open question.

Our first result is Theorem \ref{nounivAron}, which proves that under $MA(\omega_1)$ there is no universal element 
in $(\mathcal A, \le)$. This gives an alternative proof to a result of Todor{\v c}evi{\'c} from \cite{stevo2007walks}, whose Theorem 4.3.34 proves the same using the class of coherent trees. For more on this see \S\ref{history}.

The second result is Theorem \ref{wideintonormal}, which shows that under
$MA(\omega_1)$ every wide Aronszajn tree weakly embeds into an Aronszajn tree. Putting the two results together, we obtain the main result of the paper,  Theorem \ref{main}, which shows that under $MA(\omega_1)$ the class
$(\TT, \le)$ has no universal element. This resolves the question raised by \cite{MekVa}.  

Combining our result with Lemma 4.3.32 from \cite{stevo2007walks}, we obtain that under $MA(\omega_1)$ every wide Aronszajn tree weakly embeds into a coherent tree, or equivalently under $MA(\omega_1)$, into a Lipschitz tree
(Corollary \ref{improve}(1)). We also obtain (Corollary \ref{improve}(2)) a strengthening of Todor{\v c}evi{\'c}'s result about the non-existence of universal Aronszajn trees under $MA(\omega_1)$, namely we prove that under $MA(\omega_1)$ not even the class of wide Aronszajn trees suffices to weakly embed all Aronszajn (or all coherent) trees.
 
\section{Some facts about $(\TT, \le)$ and  $(\mathcal A, \le)$}\label{history}
Note that if there is a weak embedding from a tree to another, then there is one which preserves levels (see Observation \ref{levelpreserving}), so we may restrict our attention to such embeddings.

An important idea of {\DJ}uro Kurepa in \cite{Kurepasigma} (see \cite{Kurepacompletesigma} for a complete edition) is that of
a functor now known as $\sigma$-functor. This functor associates to a tree $T$ the tree $\sigma T$ of the increasing sequences of $T$, ordered by inclusion. The basic fact is that there cannot be a weak embedding from $\sigma T$ to $T$.
If $T$ has no uncountable branch, neither does $\sigma T$, but even if the cardinality of $T$ is $\aleph_1$, the
cardinality of $\sigma T$ is $2^{\aleph_0}$. However, when $CH$ holds, for any $T\in \TT$ we have 
$\sigma T \in \TT$ and similarly for $\mathcal A$. Therefore, under $CH$ neither class $(\TT, \le)$ nor $(\mathcal A, \le)$ have a universal element.

Todor{\v c}evi{\'c} studied level-preserving functions $f$ between trees which satisfy the Lipschitz condition
\begin{equation}\label{Lipschitzcond}
\Delta_{T_1}(x,y)\le \Delta_{T_2}(f(x), f(y)).
\end{equation}
We may think of Lipschitz embeddings as contractions. This notion led Todor{\v c}evi{\'c} to introduce a subclass 
of $\mathcal A$ which consists of those Aronszajn trees on which every level-preserving map from an uncountable subset of $T$ into $T$, has an uncountable Lipschitz restriction. These are called Lipschitz trees. After an initial 1996 preprint with many properties of Lipschitz trees, including the shift operation $T^{(1)}$, the full paper by Todor{\v c}evi{\'c} on this topic appeared as \cite{To}. 
In particular, by considering embeddings between Aronszajn trees into Lipschitz ones, the paper proves that assuming $BPFA^{\aleph_1}$, there is no universal element in $(\mathcal A, \le)$. Finally, in his book \cite{stevo2007walks} Todor{\v c}evi{\'c} studies the class of coherent trees, which are Aronszajn trees obtained from ordinal walks, and he proves that under $MA(\omega_1)$ all coherent trees are Lipschitz and
that such a tree $T$ embeds into $T^{(1)}$ but not the other way around. Moreover, still under $MA(\omega_1)$ every Aronszajn tree embeds into a coherent tree. This leads to the conclusion, Theorem 4.3.44 in \cite{stevo2007walks}:

\begin{theorem}[Todor{\v c}evi{\'c}]\label{Toto}(\cite{To}) Assuming $MA(\omega_1)$, there is no universal element in $(\mathcal A, \le)$.
\end{theorem}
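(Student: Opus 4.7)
The plan is to derive a contradiction from the assumption that a universal element $U$ exists in $(\mathcal{A}, \le)$, using as black boxes the three ingredients from Todor\v{c}evi\'c's work that the excerpt has already recorded as consequences of $MA(\omega_1)$: (i) every Aronszajn tree weakly embeds into a coherent tree; (ii) every coherent tree is Lipschitz; (iii) for any Lipschitz tree $T$, the shift $T^{(1)}$ satisfies $T\le T^{(1)}$ but $T^{(1)}\not\le T$.

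The argument itself is a short diagram chase. Suppose toward a contradiction that $U\in\mathcal{A}$ is universal for $(\mathcal{A},\le)$. By (i), choose a coherent tree $T$ with $U\le T$. The shift $T^{(1)}$ is again an Aronszajn tree, so by the assumed universality of $U$ we obtain $T^{(1)}\le U$. Composing the two weak embeddings yields
\[
T^{(1)} \;\le\; U \;\le\; T.
\]
By (ii), $T$ is Lipschitz, and then (iii) says precisely $T^{(1)}\not\le T$, a contradiction. Hence no such $U$ can exist.

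The main obstacle, and the reason the theorem is really Todor\v{c}evi\'c's rather than a trivial corollary, lies entirely inside ingredient (iii): the strict asymmetry of the shift $T\mapsto T^{(1)}$ on Lipschitz trees under $MA(\omega_1)$. One direction, $T\le T^{(1)}$, is a canonical construction from the definition of the shift. The hard direction is $T^{(1)}\not\le T$: one assumes a hypothetical level-preserving weak embedding $f:T^{(1)}\to T$ (legitimate by Observation~\ref{levelpreserving}), then applies $MA(\omega_1)$ to a forcing whose conditions are finite approximations forcing $f$ to be Lipschitz on an uncountable set, and finally uses the Lipschitz/coherent rigidity of $T$ to derive an uncountable branch in $T$, contradicting $T\in\mathcal{A}$. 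Since the excerpt instructs us to take ingredients (i)--(iii) as given (they are the conclusions of \cite{To} and \cite{stevo2007walks} quoted just above the theorem statement), the proof we present will simply be the short composition argument above; the heavy lifting has been done upstream.
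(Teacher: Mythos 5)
Your derivation is correct, and granting the three black boxes it is a complete proof; but note that it is precisely Todor{\v c}evi{\'c}'s original route (the one sketched in \S\ref{history} and leading to Theorem 4.3.44 of \cite{stevo2007walks}), not the route this paper takes. The only step you should make explicit is that the shift $T^{(1)}$ of a coherent tree is again coherent, hence Aronszajn, so that universality of $U$ may be applied to it; this belongs to the same cited machinery. The paper instead obtains Theorem \ref{Toto} as a corollary of Theorem \ref{nounivAron}: for an arbitrary $T\in\mathcal A$ one defines a ccc forcing $\mathbb Q(T)$ (Definition \ref{defQ}, Lemma \ref{ccc}) whose generic, after meeting the $\aleph_1$ dense sets of Claims \ref{density1} and \ref{density2}, produces a tree $T^\ast\in\mathcal A$ and a control function $c$ with $(T^\ast,T,c)\in\mathcal A_2^{\rm sp}$ (Lemma \ref{genericc}), and then Claim \ref{specialisation}(2) gives $T^\ast\not\le T$; under $MA(\omega_1)$ the generic filter is replaced by a filter meeting the given dense sets. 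The trade-off is this: your composition argument is very short, but it outsources all substance to the theory of walks, coherent and Lipschitz trees from \cite{To} and \cite{stevo2007walks}, so it adds nothing beyond the citation; the paper's specialising-triple forcing is self-contained, reproves in passing that all Aronszajn trees are special under $MA(\omega_1)$ (cf.\ \cite{BMR}), and --- crucially for this paper --- its method dualises in \S\ref{wtn} to trees with uncountable levels, which is what yields Theorem \ref{main} and Corollary \ref{improve}; the coherent/Lipschitz route as quoted is formulated for Aronszajn trees with countable levels (see the footnote concerning \cite{To}), so it does not by itself reach the wide case that is the paper's goal.
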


Many more results are known about $(\mathcal A, \le)$, one can consult surveys \cite{Stevoontrees} for earlier and \cite{JustinstructureA} for more recent results. 

Not that much is known about the full class $(\mathcal T, \le)$. We cite the two results that we are aware of. The first one is a consistency result
obtained by Mekler and V\"a\"an\"anen.

\begin{Theorem}\label{con} (\cite{MekVa}) Assume $CH$ holds and $\kappa$
is a regular cardinal satisfying $\aleph_2\le\kappa$
and $\kappa\le 2^{\aleph_1}$. Then there is
a forcing notion that preserves cofinalities (hence cardinalities) and the value of
$2^\lambda$ for all $\lambda$, and which forces the universality number of $({\TT},\le)$ 
and the universality number of $(\mathcal A, \le)$ both to be $\kappa$.
\end{Theorem}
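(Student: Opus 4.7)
My plan is to build the forcing $P$ as a countable-support iteration of length $\kappa$, each factor being an $\aleph_1$-complete forcing of cardinality $\aleph_1$ that adjoins a single generic wide Aronszajn tree $T_\alpha$ on $\omega_1$. A typical condition in a factor is a countable approximation to $T_\alpha$ together with a side condition (for instance a pressing-down clause on node labels) blocking the formation of an uncountable branch. Under CH in the ground model, a standard $\Delta$-system argument gives $P$ the $\aleph_2$-c.c., and combined with $\aleph_1$-completeness of each factor this delivers preservation of cofinalities, cardinals, and of $2^\lambda$ for every $\lambda$.

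For the lower bound on both universality numbers I will argue by diagonalization. Given a candidate family $\mathcal{F}=\{S_\beta : \beta<\mu\}\subseteq \TT^{V[G]}$ with $\mu<\kappa$, chain-condition bookkeeping produces a set $A\subseteq\kappa$ with $|A|<\kappa$ such that every $S_\beta$ admits a $P\rest A$-name (each nice name uses $\aleph_1$ many coordinates, and $\mu\cdot\aleph_1<\kappa$). For any coordinate $\alpha\in\kappa\setminus A$ the tree $T_\alpha$ is generic over $V[G\rest A]$, and the key density claim in the $\alpha$-th factor is that for every $S\in V[G\rest A]$ the set of conditions forcing ``$T_\alpha\not\le S$'' is dense. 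Given a condition carrying a partial weak embedding attempt $f$, one extends by adjoining a new level of $T_\alpha$ whose nodes would, under any extension of $f$, accumulate along a single branch of $S$, contradicting the Aronszajn character of $S$; meanwhile the side condition prevents any analogous branch in $T_\alpha$ itself. Hence $T_\alpha\not\le S_\beta$ for every $\beta<\mu$, so $\mathcal{F}$ fails to be universal.

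For the upper bound one interleaves into the iteration a bookkeeping that lists every $P$-name for a wide Aronszajn tree up to a given stage (using the $\aleph_2$-c.c.\ to enumerate such names at each intermediate stage within $\kappa$) and, at each step, folds in enough constraints on the next generic $T_\alpha$ so as to realise a weak embedding of the listed tree into $T_\alpha$. This yields that $\{T_\alpha:\alpha<\kappa\}$ is itself a universal family, giving universality number at most $\kappa$. The Aronszajn case is handled by specialising the building block to adjoin trees with countable levels, and checking that both the diagonalisation and the bookkeeping are compatible with that restriction.

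The principal obstacle will be the density claim at the heart of the lower bound: one must verify that a new level of $T_\alpha$ can always be chosen so as to obstruct every extension of a given partial embedding into $S$, without simultaneously generating an uncountable branch inside $T_\alpha$ (and while still being compatible with the upper-bound bookkeeping constraints). Balancing the upward freedom needed for the diagonalisation against the side-condition requirement that $T_\alpha$ remain branchless, and against the embedding obligations required to keep $\{T_\alpha\}_{\alpha<\kappa}$ cofinal, is the technical heart of the Mekler--V\"a\"an\"anen construction.
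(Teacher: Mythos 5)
First, a point of orientation: the paper does not prove Theorem \ref{con} at all --- it is quoted as background from Mekler and V\"a\"an\"anen \cite{MekVa} --- so there is no proof here to compare yours against, and your sketch has to stand on its own. As written it does not, and the most serious gap is in the upper bound. The forcing is required to preserve $2^\lambda$ for all $\lambda$, so in the extension there are $2^{\aleph_1}\ge\kappa$ wide Aronszajn trees, and in the intended range of the theorem $2^{\aleph_1}$ may be strictly bigger than $\kappa$; already each intermediate model $V[G\rest\alpha]$ has $2^{\aleph_1}$-many wide Aronszajn trees and at least that many nice names for them. A bookkeeping that meets one (or even $\aleph_1$-many) names per stage along an iteration of length $\kappa$ therefore reaches only $\kappa$-many trees and cannot ``list every $P$-name for a wide Aronszajn tree''; the claim that the $\aleph_2$-c.c.\ lets you enumerate them within $\kappa$ is simply false when $2^{\aleph_1}>\kappa$. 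What the $\aleph_2$-c.c.\ and the regularity of $\kappa$ do give is that every wide Aronszajn tree of $V[G]$ already lies in some $V[G\rest\alpha]$; so for $\{T_\alpha:\alpha<\kappa\}$ to be cofinal in $(\TT,\le)$ each $T_\alpha$ must absorb \emph{all} wide Aronszajn trees of $V[G\rest\alpha]$ at once, i.e.\ the stage-$\alpha$ iterand has to add a tree universal over the whole intermediate model. Producing such an iterand (and showing it is compatible with CH-preservation, the chain condition, and the diagonalisation) is precisely the nontrivial content of the theorem, and nothing in your sketch supplies it.

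The lower bound is also not established. The statement ``the set of conditions forcing $T_\alpha\not\le S$ is dense in the $\alpha$-th factor'' is not of the right logical shape: non-embeddability quantifies over all names for weak embeddings, a weak embedding of $T_\alpha$ into some $S_\beta$ could be added by the tail of the iteration (or indeed be imposed by the very embedding obligations you fold in for the upper bound), and an embedding in $V[G]$ need not lie in $V[G\rest A][G_\alpha]$, so ``$T_\alpha$ is generic over $V[G\rest A]$'' by itself proves nothing. Moreover, adjoining one new level ``whose nodes would, under any extension of $f$, accumulate along a single branch of $S$'' is not something a single extension of a condition can enforce against all future extensions of $f$. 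What is needed is a combinatorial device carried by the conditions that outright forbids such an embedding and is preserved by the remainder of the iteration --- compare the specialising function $c$ of Definition \ref{the_triples} and the forcing $\mathbb Q(T)$ of Section \ref{sec:Aron} of this paper, where non-embeddability (and speciality, hence branchlessness under any $\omega_1$-preserving forcing) is wired into a single function into $\omega$; similarly your ``pressing-down clause'' must be concrete enough that later stages cannot add an uncountable branch to $T_\alpha$. You flag this tension yourself in your final paragraph, but flagging it is not closing it: as it stands, neither the inequality ``universality number $\ge\kappa$'' nor ``$\le\kappa$'' is proved, for either $(\TT,\le)$ or $(\mathcal A,\le)$.
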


The next result, obtained by 
D{\v z}amonja and V\"a\"an\"anen,
 is in the presence of club guessing at $\omega_1$ and the failure of $CH$. It concerns weak embeddings
called $\Delta$-preserving and defined by 
 \begin{equation}
\Delta_{T_1}(x,y)= \Delta_{T_2}(f(x), f(y)).
\end{equation}

\eject 

\begin{Theorem}[D{\v z}amonja and V\"a\"an\"anen](\cite{firstwithJouko})\label{result} Suppose that
\begin{description}
\item{(a)} there is a ladder system $\bar{C}=\langle c_\delta:\,\delta<\omega_1\rangle$
which guesses clubs, i.e. satisfies that for any club $E\subseteq \omega_1$ there 
are stationarily many
$\delta$ such that $c_\delta\subseteq E$,
\item{(b)} $\aleph_1 < 2^{\aleph_0}$.
\end{description}
Then no family of
size $<2^{\aleph_0}$ of trees of size
$\aleph_1$, even if we allow uncountable branches, can $\le$-embed all members of
${\TT}$ in a way that preserves $\Delta$. 
\end{Theorem}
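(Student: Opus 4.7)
\begin{Proof}
We sketch the plan. The strategy is to construct a family $\{T_r : r\in 2^\omega\}$ of $2^{\aleph_0}$-many wide Aronszajn trees so that, for any tree $S$ of size $\aleph_1$, the set of $r$ for which $T_r$ admits a $\Delta$-preserving weak embedding into $S$ has cardinality at most $\aleph_1$. Combined with $\kappa<2^{\aleph_0}$ and the hypothesis (b), this yields a $T_r$ outside any family of size $<2^{\aleph_0}$: the total number of $r$'s accounted for is $\aleph_1\cdot\kappa<2^{\aleph_0}$.

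\emph{Construction.} Fix a wide Aronszajn tree $T_*\in\TT$ on $\omega_1$ with a canonical branch structure. For each real $r\in 2^\omega$, build $T_r$ from $T_*$ by adding, at each limit $\delta$, countably many new ``top-nodes'' at level $\delta$ whose pairwise meet-heights, indexed by $n<\omega$ via the enumeration $c_\delta=\{\gamma_n^\delta:n<\omega\}$, encode the bits of $r$. The same real $r$ is written at every $\delta$ but with respect to the ladder $c_\delta$, so the bit $r(n)$ controls the local meet-pattern near level $\gamma_n^\delta$. Care is taken so that no uncountable branch arises, keeping $T_r\in\TT$.

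\emph{Recovery.} Let $f:T_r\to S$ be a $\Delta$-preserving weak embedding; by Observation~\ref{levelpreserving} we may take $f$ level-preserving, and hence meet-preserving by the $\Delta$-identity. Let $E$ be the club of $\delta<\omega_1$ on which the enumerations of $T_r$, $S$, $f$ and $T_*$ cohere. By (a), there are stationarily many $\delta$ with $c_\delta\sub E$. At any such $\delta$, the meet-pattern of the images under $f$ of the top-nodes at level $\delta$ in $S$, together with $f$ on a fixed countable reference chain inside $T_*$, fully witnesses $r$. In particular $r$ is computed from $S$ and a choice of image for this reference chain; as $|S|=\aleph_1$, there are $\le\aleph_1$ such choices, hence $\le\aleph_1$ admissible $r$'s.

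\emph{Conclusion and main obstacle.} For any family $\{S_\alpha:\alpha<\kappa\}$ with $\kappa<2^{\aleph_0}$, at most $\kappa\cdot\aleph_1<2^{\aleph_0}$ of the $r$ satisfy that $T_r$ $\Delta$-embeds into some $S_\alpha$; thus some $r$ is missed and the corresponding $T_r\in\TT$ witnesses the theorem. The central technical challenge is the limit construction: the meet-pattern encoding of $r$ at level $\delta$ must be rigid enough that any $\Delta$-preserving embedding transports it into $S$, and simultaneously sparse enough not to force cofinal branches, so that $T_r\in\TT$. Club guessing (a) is precisely what ensures that the many local codes written at the various $\delta$ can all be read from $S$ through a single embedding.
\end{Proof}
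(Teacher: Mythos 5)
You should first note that the paper itself contains no proof of Theorem \ref{result}: it is quoted from \cite{firstwithJouko} as background, so there is no internal argument to compare yours with, and your proposal has to stand on its own. As written it does not, and the most concrete problem is the counting in your recovery step. You claim that, for a fixed tree $S$ of size $\aleph_1$, the real $r$ is ``computed from $S$ and a choice of image for this reference chain'', where the reference object is a countable chain, and you conclude that at most $\aleph_1$ many $r$ admit a $\Delta$-preserving weak embedding of $T_r$ into $S$. But the number of possible images of a countable chain inside a tree of size $\aleph_1$ is $\aleph_1^{\aleph_0}$, which under your own hypothesis (b) equals $2^{\aleph_0}$, not $\aleph_1$. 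So the bound of $\le\aleph_1$ admissible $r$'s per $S$ does not follow from what you wrote, and with it the final computation $\kappa\cdot\aleph_1<2^{\aleph_0}$ collapses. To make this scheme work you would need $r$ to be recoverable from data about $S$ admitting at most $\aleph_1$ possibilities (for instance finitely many nodes per guessed $\delta$, counted over the $\aleph_1$ many $\delta$'s), and nothing in the sketch provides such a reduction.

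The second gap is that the construction of $T_r$, which you yourself flag as the central technical challenge, is simply not carried out. Writing the same real at every limit $\delta$ through the meet-pattern of new top nodes indexed along $c_\delta$, while ensuring both that no uncountable branch appears and that the pattern is faithfully transported by an arbitrary level- and meet-preserving embedding into a tree $S$ which may have uncountable branches and uncountable levels, is precisely where the hypotheses must do real work. In your sketch, clause (a) is invoked only to produce stationarily many $\delta$ with $c_\delta\subseteq E$, but the ensuing claim that at such $\delta$ the images of the top nodes ``fully witness $r$'' is asserted rather than proved; it is exactly the point at which one must explain why the code written along $c_\delta$ can be read off inside $S$ at all. (Your preliminary observation is fine: a $\Delta$-preserving weak embedding can indeed be replaced by a level-preserving one, and level-preservation together with preservation of $\Delta$ forces preservation of meets.) As it stands, the two load-bearing steps --- the coding construction and the per-$S$ counting --- are respectively missing and incorrect, so the proposal cannot be accepted as a proof of Theorem \ref{result}.
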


Before this paper it was not known if $(\mathcal T, \le)$ had a universal element under $MA(\omega_1)$. Our result
\ref{main} proves that it does not. It is not known if there is a model of set theory in which $(\mathcal T, \le)$ does have a 
universal element. Moreover, our results (see Corollary \ref{improve}(2)) strengthen both this conclusion and Theorem
\ref{Toto} in that they imply that under $MA(\omega_1)$ there is no $T$ in the larger class $(\mathcal T, \le)$
which weakly embeds all elements of $(\mathcal A, \le)$. It is not known if there is a model of set theory in which
$(\mathcal A, \le)$ or $(\mathcal T, \le)$ have a universal element.

\section{Specialising triples and their basic properties} 

\begin{notation}\label{} (1) For an ordinal $\gamma <\omega_1$ we denote by $\hgt(\gamma)$ the unique $\alpha$ such that 
$\gamma\in [\omega\alpha, \omega\alpha +\omega)$.

\smallskip 
{\noindent (2)} 
We can without loss of generality represent $\mathcal A$ as the set of all normal rooted $\omega_1$-trees $T$ with no uncountable branches whose
$\alpha$-th level is indexed by a subset of the ordinals in 
$[\omega\alpha, \omega\alpha +\omega)$, for $\alpha<\omega_1$.
The root $\langle \rangle$ is considered of level $-1$. 

(Recall that the requirement of being {\em normal} for a rooted tree means that
if $\gamma_0\neq \gamma_1$ are of the same limit level, then there exists $\beta$ with $\beta <_T\gamma_l$ 
for exactly one $l<2$).

\smallskip 
{\noindent (3)} If $T\in \mathcal A$ and $s,t \in T$,
we denote by $s\cap_T t$ the maximal ordinal $\gamma$ such that $\gamma <_T s, t$. (Such an ordinal exists by the assumption in (2)).

If $\hgt(x)=\alpha >\beta$, then by $x\rest\beta$ we denote the unique ordinal $y$ with 
$\hgt(y)=\beta$ and $y<_T x$.

\smallskip 
{\noindent (4)} For $T_1, T_2 \in \mathcal A$ and $(x,y)\in 
\bigcup_{\alpha <\omega_1} \lev_\alpha(T_1)\times \lev_\alpha (T_2)$, we let $\alpha(x,y)$ denote the $\alpha$ such that
$x\in  \lev_\alpha (T_1)$ (and so $y\in  \lev_\alpha(T_2)$).
\smallskip 
\end{notation}

\begin{definition}\label{the_triples} 
Let $\mathcal A_2^{\rm sp}$ be the set of all triples $(T_1, T_2, c)$ where $T_1, T_2 \in \mathcal A$ and  $c$ is a function from
$\bigcup_{\delta \mbox{  limit }<\omega_1} \lev_\delta (T_1)\times \lev_\delta (T_2) $ to $\omega$ such that
\begin{itemize}
\item\label{switchinglevels} if $c(x_1, y_1)= c(x_2, y_2)$ and $(x_1, y_1)\neq (x_2, y_2)$, then
 $\alpha(x_1, y_1)\neq \alpha (x_2, y_2)$, $x_1\bot _{T_1}x_2$, 
 $y_1\bot_{T_2} y_2$ and 
 \[
 \Delta_{T_1}(x_1, x_2)> \Delta_{T_2}(y_1, y_2).  
 \]
\end{itemize}
\end{definition}

\begin{remark}\label{} By the definition of $\mathcal A$, we have that for any $T\in \mathcal A$ and any $\gamma\in T$,
$\hgt(\gamma)$ is the same as $\hgt_T(\gamma)$. The defining condition of specialising triples could have therefore been
written in termes of heights, $\hgt(x_1\cap x_2)> \hgt(y_1\cap y_2)$.

Also note that a weak embedding is not required to be injective, but is injective on any branch of its domain. Finally,
observe that every rooted Aronszajn tree is weakly bi-embeddable with a rooted normal one and hence that concentrating on rooted normal trees does not change anything from the point of view of universality results.
\end{remark}

The following is well known, see for example Claim 6.1 of \cite{MirnaJoukochains}.

\begin{observation}\label{levelpreserving} If there exists a weak embedding from a tree $T_1$ to a tree 
$T_2$, then there exists one which preserves levels, namely satisfying $\hgt_{T_1}(x)=\hgt_{T_2}(f(x))$
for all $x\in T_1$.
\end{observation}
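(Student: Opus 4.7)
The plan is to modify $f$ by ``projecting down'' each image to the correct level in $T_2$. The key preliminary observation is that if $f : T_1 \to T_2$ is a weak embedding, then for every $x \in T_1$ one has $\hgt_{T_2}(f(x)) \geq \hgt_{T_1}(x)$. This is because the set $\{z \in T_1 : z <_{T_1} x\}$ is well-ordered of order type $\hgt_{T_1}(x)$, and since $f$ preserves strict order, its $f$-image is a chain in $T_2$ of the same order type lying strictly below $f(x)$. Hence the ordinal rank of $f(x)$ in $T_2$ is at least $\hgt_{T_1}(x)$. This step goes through uniformly at successor and limit levels, since the argument is purely about order types of chains below $x$.

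Given this, I would then define $g : T_1 \to T_2$ by
\[
g(x) = f(x) \rest \hgt_{T_1}(x),
\]
that is, $g(x)$ is the unique ancestor of $f(x)$ at level $\hgt_{T_1}(x)$ in $T_2$; this is well defined by the preliminary step. Level preservation is built in by construction.

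It remains to check that $g$ is still a weak embedding. Suppose $x <_{T_1} y$ and set $\alpha = \hgt_{T_1}(x) < \beta = \hgt_{T_1}(y)$. Then $g(x) \leq_{T_2} f(x) <_{T_2} f(y)$, so $g(x)$ is an ancestor of $f(y)$ at level $\alpha$. Likewise $g(y) = f(y) \rest \beta$ is an ancestor of $f(y)$ at level $\beta$. The set of predecessors of $f(y)$ is linearly ordered in $T_2$, so $g(x)$ and $g(y)$ are $T_2$-comparable; since $\alpha < \beta$, we conclude $g(x) <_{T_2} g(y)$, as required.

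There is no serious obstacle here; the only mild subtlety is checking that the height inequality $\hgt_{T_2}(f(x)) \geq \hgt_{T_1}(x)$ propagates through limit levels, but this is immediate from the order-type argument in the first step. Note also that $g$ need not be injective, but this is harmless since weak embeddings are not required to be injective and, in any event, $g$ remains injective on every chain of $T_1$ (being strictly order preserving there), which is all that is used later.
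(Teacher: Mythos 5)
Your proposal is correct and follows essentially the same route as the paper: define $g(x)=f(x)\rest\hgt_{T_1}(x)$, justified by the fact that a weak embedding satisfies $\hgt_{T_1}(x)\le\hgt_{T_2}(f(x))$, and check that $g$ is still strictly order preserving. You merely spell out the order-type argument for the height inequality and the comparability of $g(x),g(y)$ below $f(y)$, which the paper leaves implicit.
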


\begin{proof} Let $f:\, T_1\to T_2$ be a weak embedding. For $t\in T_1$, we can define $g(t)=f(t)\rest \hgt(t)$,
since $f$ being a weak embedding implies that for every such $t$ we have $\hgt_{T_1}(t)\le \hgt_{T_2}(f(t))$.
Now note that if
$s<_{T_1} t$, then $\hgt(s)<_{T_1} \hgt(t)$ and so $g(s)<_{T_2}g(t)$.
\end{proof}

\begin{claim}\label{specialisation} (1) If $(T_1, T_2, c) \in \mathcal A_2^{\rm sp}$ then both $T_1$ and $T_2$ are special Aronszajn trees.

\smallskip 

{\noindent (2)}  If $(T_1, T_2, c)\in  \mathcal A_2^{\rm sp}$ then $T_1$ is not weakly embeddable in $T_2$.

\smallskip 

{\noindent (3)} Every rooted normal Aronszajn tree is isomorphic to a tree in $\mathcal A$.
\end{claim}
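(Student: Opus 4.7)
My plan is to address the three parts in numerical order; (3) is essentially a renaming, while (1) and (2) both exploit the defining clause of $\mathcal{A}_2^{\rm sp}$ in slightly different ways.

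For Part (1), I show that $S_1 := \{x \in T_1 : \hgt(x) \text{ is a limit ordinal}\}$ is a union of $\omega$ antichains, which suffices for specialness via a standard extension. Since $T_2 \in \mathcal{A}$ has nonempty $\lev_\delta(T_2)$ at every limit $\delta$, I pick for each $x \in S_1$ some $y_x \in \lev_{\hgt(x)}(T_2)$ and set $\bar c_1(x) := c(x, y_x) \in \omega$. If $x_1 \neq x_2$ in $S_1$ satisfy $\bar c_1(x_1) = \bar c_1(x_2)$, then $(x_1, y_{x_1})$ and $(x_2, y_{x_2})$ are distinct pairs of equal $c$-value, so the defining clause forces $x_1 \bot_{T_1} x_2$; hence each fibre of $\bar c_1$ is an antichain. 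To extend $\bar c_1$ to a specialisation of all of $T_1$ I use the standard offset trick: any $x \in T_1$ has $\hgt(x) = \delta + k$ with $\delta$ either $0$ or a limit ordinal and $k \in \omega$, and the colour $(k, \bar c_1(x \rest \delta))$ (with a separate slot when $\delta = 0$ and the convention $x \rest \delta = x$ when $k = 0$) separates comparable nodes of $T_1$. The symmetric construction $\bar c_2(y) := c(x_y, y)$ for some $x_y \in \lev_{\hgt(y)}(T_1)$ specialises $T_2$, because the defining clause also forces $y_1 \bot_{T_2} y_2$ inside any monochromatic fibre.

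For Part (2), suppose for contradiction that $T_1$ weakly embeds into $T_2$. By Observation \ref{levelpreserving} pick a level-preserving such embedding $f$. Define $h : S_1 \to \omega$ by $h(x) := c(x, f(x))$, which is well-defined since $\hgt(f(x)) = \hgt(x)$ is a limit ordinal. Because $|S_1| = \aleph_1$, pigeonhole yields distinct $x_1, x_2 \in S_1$ with $h(x_1) = h(x_2)$. The pairs $(x_1, f(x_1))$ and $(x_2, f(x_2))$ are distinct and share a $c$-value, so the defining clause gives
\[
\Delta_{T_1}(x_1, x_2) > \Delta_{T_2}(f(x_1), f(x_2)).
\]
On the other hand, setting $z := x_1 \cap_{T_1} x_2$, we have $z <_{T_1} x_1, x_2$ and hence $f(z) <_{T_2} f(x_1), f(x_2)$; by level-preservation
\[
\Delta_{T_2}(f(x_1), f(x_2)) \;\geq\; \hgt(f(z)) \;=\; \hgt(z) \;=\; \Delta_{T_1}(x_1, x_2),
\]
which contradicts the previous inequality. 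Part (3) is purely bookkeeping: given a rooted normal Aronszajn tree $T$ with countable levels, choose for each $\alpha < \omega_1$ an injection $\phi_\alpha : \lev_\alpha(T) \hookrightarrow [\omega\alpha, \omega\alpha + \omega)$, send the root to $\langle \rangle$, and transport the tree order along the disjoint union of the $\phi_\alpha$; the image lies in $\mathcal{A}$ and is isomorphic to $T$.

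The only mildly delicate step is the offset extension in Part (1), passing from a specialisation of the limit-level nodes to a specialisation of all of $T_i$; this is folklore and entirely bookkeeping, so no step of the argument looks genuinely obstructive.
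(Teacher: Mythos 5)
Your proposal is correct and follows essentially the same route as the paper: part (1) colours limit-level nodes via $c$ against chosen witnesses in the other tree and then handles successor offsets by finite bookkeeping (the paper packages this into one bijection $g:\omega^3\to\omega$ with a fixed $z_\delta$ per limit level), and part (2) is the same pigeonhole on $c(x,f(x))$ for a level-preserving $f$ followed by the meet-height contradiction, with (3) treated as routine in both. The only cosmetic differences are choosing $y_x$ per node rather than per level and pigeonholing over nodes rather than over one representative per limit level.
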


\begin{proof} (1) Clearly, every tree in $\mathcal A$ is an $\omega_1$-tree, so $T_1$ and $T_2$ are $\omega_1$-trees. 
Let us first show that $T_1$ is special, so we shall define a function $d:\,T_1\to \omega$ which witnesses that. 

Notice that by the assumption that $T_2$ is of height $\omega_1$, we can choose $z_\delta$ of height $\delta\in T_2$,
for every limit $\delta$. Let $g:\,\omega\times\omega\times\omega\to\omega$
be a bijection. Every $x\in T_1$ is of the form $\omega\delta+\omega m+ n$ for some limit ordinal $\delta$ and natural numbers $m$ and $n$. For such $x$, define $d(x)= g(c(x\rest\delta, z_\delta), m,n)$.

Suppose that $x=\omega\delta+\omega m+ n$, $y=\omega\beta+\omega k+ l$ and that $d(x)=d(y)$, while $x\neq y$.
Therefore $g(c(x\rest\delta, z_\delta), m,n)=g(c(y\rest\beta, z_\beta), k,l)$ and we obtain $m=k$ and
$n=l$ while $c(x\rest\delta, z_\delta)=c(y\rest\beta, z_\beta)$. Since $x\neq y$ we must have $\beta\neq\delta$ and
therefore $x\rest\delta\neq y\rest \beta$.
By the properties of $c$ we obtain
$x\rest\delta\bot_{T_1} y\rest \beta$ and therefore $x\bot_{T_1} y$. In conclusion, $d^{-1}(\{a\})$ is an antichain, for any $a<\omega$, and therefore $d$ witnesses that $T_1$ is special. A similar proof shows that $T_2$ is special. As clearly every special $\omega_1$-tree is Aronszajn, the claim is proved.

\smallskip

{\noindent (2)} Suppose for a contradiction that $f$ is a weak embedding from $T_1$ to $T_2$. By Observation
\ref{levelpreserving}, we can assume that $f$ preserves levels. For each $\alpha$ limit $<\omega_1$ choose
$x_\alpha$ on the $\alpha$-th level of $T_1$. Note that by the level preservation of $f$, the value $c(x_\alpha, f(x_\alpha))$ is well-defined. Consider $\{ c(x_\alpha, f(x_\alpha)):\, \alpha \mbox{ limit }<\omega_1\}$, which is necessarily a countable set since the range of $c$ is $\omega$. Hence, there must be $\alpha<\beta$ such that $c(x_\alpha, f(x_\alpha))=c(x_\beta, f(x_\beta))$. By the defining property of $c$ we have that $x_\alpha\bot_{T_1} x_\beta$. 

Since $f$ is strict-order preserving we have that 
\[
f(x_\alpha \cap_{T_1} x_\beta)<_{T_2} f(x_\alpha), f(x_\beta)
\]
and therefore $f(x_\alpha \cap_{T_1} x_\beta)\le _{T_2} f(x_\alpha)  \cap_{T_2} f(x_\beta)$. However, 
\[
\hgt (f(x_\alpha \cap_{T_1} x_\beta)) =\hgt (x_\alpha \cap_{T_1} x_\beta) >\hgt ( f(x_\alpha)  \cap_{T_2} f(x_\beta)),
\]
a contradiction.

\smallskip

{\noindent (3)} Obvious.
\end{proof}

\section{Embeddings between Aronszajn trees and the non-existence of a universal element under $MA$}\label{sec:Aron} This section is devoted to the proof of the following theorem.

\begin{theorem}\label{nounivAron} For every tree $T\in\mathcal A$, there is a ccc forcing $\mathbb Q=\mathbb Q(T)$
and a family $\FF=\FF(T)$ of $\aleph_1$-many dense sets in $\mathbb Q$ such that every 
$\FF$-generic filter adds a
tree $T^\ast$ in $\mathcal A$ and a function $c$ such that $(T^\ast, T, c)$ form a specialising triple. In particular, $T^\ast$ is not weakly embeddable into $T$ and, hence, under the assumption of $MA(\omega_1)$ there is no Aronszajn tree universal under weak embeddings.
\end{theorem}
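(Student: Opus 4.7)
The plan is to define $\mathbb Q(T)$ as a finite-approximation forcing to a specialising triple over $T$, list $\aleph_1$-many dense sets securing the shape of the generic, and establish ccc of $\mathbb Q(T)$. A condition $p\in\mathbb Q(T)$ is a triple $(t_p,<_p,c_p)$ where $t_p\subseteq\omega_1$ is finite (each $\gamma\in t_p$ regarded at level $\alpha$ when $\gamma\in[\omega\alpha,\omega\alpha+\omega)$), $<_p$ is a tree order on $t_p$ respecting these levels, and $c_p$ is a finite partial function with values in $\omega$ defined on a finite subset of pairs $(x,y)$ with $x\in t_p$, $y\in T$ and $\hgt(x)=\hgt(y)$ limit, satisfying on its domain all the clauses of Definition \ref{the_triples} with $<_p$ in place of $<_{T_1}$. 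The ordering is coordinatewise extension.

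For $\mathcal F(T)$ I would take: for each $\alpha<\omega_1$, the dense set forcing $t_p\cap[\omega\alpha,\omega\alpha+\omega)\ne\emptyset$; for each limit $\delta<\omega_1$ and each pair $(\gamma,y)\in[\omega\delta,\omega\delta+\omega)\times\lev_\delta(T)$, the dense set forcing $c_p(\gamma,y)$ to be defined once $\gamma\in t_p$; and analogous dense sets ensuring that the generic tree $T^\ast=\bigcup_{p\in G}t_p$ is normal. Together with the automatic root and countable levels, the generic triple $(T^\ast,T,c)$ with $c=\bigcup_{p\in G}c_p$ satisfies the hypotheses of the proof of Claim \ref{specialisation}(1), so $T^\ast$ is special and hence $T^\ast\in\mathcal A$. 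Thus $(T^\ast,T,c)\in\mathcal A_2^{\rm sp}$, and Claim \ref{specialisation}(2) yields that $T^\ast$ does not weakly embed into $T$.

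For ccc, given $\{p_i:i<\omega_1\}\subseteq\mathbb Q(T)$, I would apply the $\Delta$-system lemma simultaneously to the finite sets $t_{p_i}$, $\dom(c_{p_i})$, and the finite meet-closed subtrees $U_{p_i}\subseteq T$ generated by the $y$-coordinates of $\dom(c_{p_i})$, then pigeonhole on the (countable) isomorphism type of each $p_i$, including the root ordering, the embedding $U_{p_i}\hookrightarrow T$, and the assignments of colours to new pairs. This yields an uncountable $I$ on which all $p_i$ share a common root condition $p^\ast=(t^\ast,<^\ast,c^\ast)$ and a common $T$-root $U^\ast$, with identical combinatorial shape outside the root. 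A further Fodor-thinning can arrange that for distinct $i,j\in I$ the levels of new $y$-coordinates of $p_i$ are disjoint from those of $p_j$, that any new $y$-coordinate of $p_i$ is incomparable in $T$ with any new $y$-coordinate of $p_j$, and that their meets in $T$ fall strictly below the $t^\ast$-height at which the corresponding new $x$-coordinates of $p_i$ and $p_j$ meet after being forced incomparable. The amalgamation $p$ is then $t_p=t_{p_i}\cup t_{p_j}$, $c_p=c_{p_i}\cup c_{p_j}$, and the tree order making non-root parts pairwise incomparable. Any colour coincidence in $p$ is either internal to $p_i$ or $p_j$, involves a root pair (reducible to an internal coincidence), or is a genuine cross-coincidence between new pairs; the last case is handled exactly by the four thinnings above, so that all clauses of Definition \ref{the_triples} are met.

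The main obstacle is the last of these thinnings, namely securing the inequality $\hgt(x_1\cap x_2)>\hgt(y_1\cap y_2)$ for cross-coincidences. It links the structure of $T$ (through $U^\ast$ and new $y$-meets) to the structure of $t^\ast$ (through the height at which new $x$'s from different sides meet), and getting the two to cooperate is the delicate technical core of the ccc argument. Once ccc is established, $MA(\omega_1)$ applied to $\mathbb Q(T)$ and $\mathcal F(T)$ produces an $\mathcal F$-generic filter $G$, yielding $(T^\ast,T,c)\in\mathcal A_2^{\rm sp}$; Claim \ref{specialisation}(2) then says $T^\ast$ does not weakly embed into $T$, so $T$ is not $\le$-universal in $\mathcal A$. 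Since $T\in\mathcal A$ was arbitrary, $(\mathcal A,\le)$ has no universal element under $MA(\omega_1)$.
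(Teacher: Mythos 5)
Your forcing, dense sets, and use of Claims \ref{specialisation}(1),(2) follow the same architecture as the paper, but the ccc argument --- which you yourself flag as ``the delicate technical core'' --- has a genuine gap, and the plan you sketch for it would fail. First, the thinning you invoke to make every new $y$-coordinate of $p_i$ incomparable in $T$ with every new $y$-coordinate of $p_j$, for \emph{all} pairs $i\neq j$ in an uncountable set, is not a Fodor/$\Delta$-system matter: incomparability in the fixed tree $T$ is exactly the content of the Baumgartner--Malitz--Reinhardt lemma (Claim \ref{BMRlemma}), which in ZFC yields only \emph{two} such conditions (the uncountable-subfamily version fails, e.g., for a Souslin tree, since it would produce an uncountable antichain). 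Two conditions are all one needs for ccc, so this part is repairable by quoting \ref{BMRlemma} instead.

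The deeper problem is your treatment of cross-coincidences $c(x_1,y_1)=c(x_2,y_2)$. You amalgamate by taking $t_p=t_{p_i}\cup t_{p_j}$ with the non-root parts pairwise incomparable, so the meet $x_1\cap x_2$ of new $x$'s from the two sides is forced to sit in the common root $t^\ast$, at height at most $\gamma^\ast=\max$ of the root heights; you then hope to thin so that $\hgt(y_1\cap_T y_2)$ falls strictly below that. No thinning can achieve this: the new $y$'s of all the conditions may lie above a single fixed node of $T$ of height far greater than $\gamma^\ast$, so every cross-meet $y_1\cap_T y_2$ is high, and the required inequality $\hgt(x_1\cap x_2)>\hgt(y_1\cap_T y_2)$ is then unattainable with your $t_p$. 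The missing idea, which is how the paper resolves it, goes in the opposite direction: leave the $y$-side alone (incomparability from \ref{BMRlemma} is all you use, giving $\gamma=\hgt(y_1\cap_T y_2)$ strictly below the relevant limit level) and push the $x$-meets \emph{up}, by enlarging $t_p$ with brand-new ordinals $w_{x_1,x_2}$ of successor height chosen in the interval $(\max(\gamma,\gamma^\ast),\alpha(x_1,y_1))$ and declaring $w_{x_1,x_2}=x_1\cap x_2$; this freedom exists precisely because the $x$-side tree is the generic object being constructed, and the new points, being of successor height, create no new instances of clause (\ref{mainpointQ}). Note also that for this to make sense the conditions must decide meets (as in Definition \ref{defQ}(3), which is also what gives normality of $T^\ast$), so a plain union as in your amalgamation cannot be a condition in the first place.
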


The latter is a result of Todor{\v c}evi{\'c}, see Theorem \ref{Toto}, to which our method gives an alternative proof.
We shall break the proof of Theorem \ref{nounivAron}  into the definition of the forcing and then several lemmas needed to make the desired conclusion.

\begin{definition}\label{defQ}
Suppose that $T \in \mathcal A$, we shall define a forcing notion $\mathbb Q=\mathbb Q(T)$ to consist of all $p=(u^p, v^p, <_p, c^p)$
such that:
\begin{enumerate}
\item $u^p\subseteq \omega_1\cup \{\langle\rangle\}$, $v^p\subseteq T$ are finite and $\langle\rangle\in v^p$,
\item if $\alpha\in v^p$ then there is $\beta\in u^p$ with $\hgt(\alpha)=\hgt(\beta)$,
\item $<_p$ is a partial order on $u^p$ such that $\alpha<_p\beta$ implies $\hgt(\alpha)<\hgt(\beta)$
and which fixes 
$\alpha\cap_{<_p} \beta\in u^p$ for every two different elements $\alpha, \beta$ of $u^p$ and fixes 
the root $\langle\rangle$ of $u^p$,
\item\label{mainpointQ} $c^p$ is a function from 
$\bigcup_{\delta \mbox{  limit }<\omega_1} \lev_\delta (u^p)\times \lev_\delta (v^p) $ to $\omega$ such that the analogue of
the requirement from Definition \ref{switchinglevels} holds, that is:

if $c(x_1, y_1)= c(x_2, y_2)$ and $(x_1, y_1)\neq (x_2, y_2)$, then
$\alpha(x_1, y_1)\neq \alpha (x_2, y_2)$, $x_1\bot _{T_1}x_2$, 
 $y_1\bot_{T_2} y_2$ and 
 \[
 \hgt (x_1\cap_{T_1} x_2) >  \hgt (y_1\cap_{T_2} y_2). 
 \]
\end{enumerate}

The order $p\le q$ on $\mathbb Q$ is given by inclusion $u^p\subseteq u^q, v^p\subseteq v^q,
<_p\subseteq <_q, c^p\subseteq c^q$ with the requirement that if $p\le q$, then the intersection and the root given by $<_p$ are preserved in $<_q$.
\end{definition}

\begin{lemma}\label{genericc} There is a family $\FF$ of $\aleph_1$-many dense subsets of $\mathbb Q$ such that for any $G$ which is $\FF$-generic, letting
\[
T^\ast=\bigcup\{<_p:\,p\in G\} \mbox{ and }  c=\bigcup\{c^p:\,p\in G\}
\]
gives $(T^\ast, T, c)\in \mathcal A_2^{\rm sp}$.
\end{lemma}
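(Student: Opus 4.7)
The plan is to let $\FF$ consist of three families of dense sets, each of size $\aleph_1$. For each $\alpha<\omega_1$, put $D^1_\alpha=\{p:\exists\gamma\in u^p,\ \hgt(\gamma)=\alpha\}$, which will force $T^\ast$ to reach every countable level. For each $\beta\in T$, put $D^2_\beta=\{p:\beta\in v^p\}$, which will make $v^p$ eventually cover $T$. For each $\gamma<\omega_1$ and each $\alpha<\hgt(\gamma)$, put $D^3_{\gamma,\alpha}=\{p:\gamma\notin u^p\ \text{or some } \eta\in u^p\ \text{at level }\alpha\text{ has }\eta<_p\gamma\}$, which will force each element of $T^\ast$ to carry its entire chain of predecessors inside $T^\ast$.

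Density of each set is proved by direct extension, the one flexible ingredient being the coloring. For $D^1_\alpha$ I pick a fresh ordinal $\gamma\in[\omega\alpha,\omega\alpha+\omega)\setminus u^p$ and extend by declaring $\langle\rangle<_q\gamma$ with $\gamma$ incomparable to every other element (all new meets being $\langle\rangle$). For $D^2_\beta$ I first use $D^1_{\hgt(\beta)}$ to secure a witness $\gamma\in u^p$ at level $\hgt(\beta)$, then add $\beta$ to $v^p$. For $D^3_{\gamma,\alpha}$, assuming no witness exists in $u^p$, I pick a fresh $\beta\in[\omega\alpha,\omega\alpha+\omega)\setminus u^p$ and set $\beta<_q\gamma$; its position relative to other members of $u^p$ is forced, since $<_p$-predecessors of $\gamma$ are linearly ordered and existing meets must be preserved. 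Whenever a new pair $(x,y)\in\lev_\delta(u^q)\times\lev_\delta(v^q)$ with $\delta$ limit is thereby created, I assign $c^q(x,y)$ a natural number outside the finite range of $c^p$, choosing pairwise distinct values for distinct new pairs. Condition~(\ref{mainpointQ}) can only fail when two distinct pairs share a color, so this fresh-color policy trivially keeps the specialising condition intact.

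Given an $\FF$-generic $G$, the verification that $(T^\ast,T,c)\in\mathcal A_2^{\rm sp}$ proceeds as follows. The tree $T^\ast=\bigcup\{<_p:p\in G\}$ is rooted, has $\alpha$-th level contained in $[\omega\alpha,\omega\alpha+\omega)$, reaches every countable level by $D^1$, and so is an $\omega_1$-tree. It is normal at a limit level $\delta$: given distinct $\gamma_0,\gamma_1\in\lev_\delta(T^\ast)$ with meet $\eta$ at level $\alpha<\delta$, applying $D^3_{\gamma_i,\alpha+1}$ produces $\eta_i\in\lev_{\alpha+1}(T^\ast)$ with $\eta_i<_{T^\ast}\gamma_i$, and these must be distinct, else $\hgt(\eta)\ge\alpha+1$. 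Directedness of $G$ makes $c$ total on each $\lev_\delta(T^\ast)\times\lev_\delta(T)$ at limit $\delta$, and condition~(\ref{mainpointQ}) persists in the union since any purported failure would already be present in some single $c^r$.

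The absence of uncountable branches in $T^\ast$ is where the coloring pays off: if $B$ were one, for each limit $\delta<\omega_1$ pick $x_\delta\in B\cap\lev_\delta(T^\ast)$ and any $y_\delta\in\lev_\delta(T)$ (which exists since $T\in\mathcal A$), and the $\aleph_1$ values $c(x_\delta,y_\delta)\in\omega$ collide by pigeonhole at some $\delta_0\ne\delta_1$, whence (\ref{mainpointQ}) forces $x_{\delta_0}\bot_{T^\ast} x_{\delta_1}$, contradicting $B$ being a branch. The main obstacle I anticipate is not this final pigeonhole but the density arguments: extending a condition must respect the conjunction of meet-preservation, level-indexing, and coloring constraints simultaneously. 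The linear ordering of predecessors pins down the only consistent meet assignments, and the fresh-color policy handles all new coloring obligations; these two observations are what make the extensions go through.
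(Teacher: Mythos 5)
Your proof is correct, and its skeleton is the one the paper uses: your $D^1_\alpha$ and $D^2_\beta$ are exactly the paper's $\DD_\alpha$ (Claim \ref{density1}) and $\EE_y$ (Claim \ref{density2}), the fresh-colour policy is the same device used there to keep condition (\ref{mainpointQ}) intact, and the verification that the specialising condition survives the union is the same directedness argument (incomparability and meets of elements of a condition are preserved under extension because meets are fixed). Where you genuinely deviate is in how Aronszajn-ness of $T^\ast$ is certified. The paper does not introduce anything like your $D^3_{\gamma,\alpha}$: it gets normality from the fixed meets alone (Claim \ref{normality}) and then gets ``no uncountable branch'' indirectly, by observing that the generic $c$ makes $(T^\ast,T,c)$ satisfy Definition \ref{the_triples} and invoking Claim \ref{specialisation}(1), i.e.\ $c$ yields a specialising function on $T^\ast$. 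You instead add the $\aleph_1$-many sets $D^3_{\gamma,\alpha}$ forcing every node of $T^\ast$ to have predecessors at all lower levels, and then argue normality and the branch-killing pigeonhole directly. This costs an extra (routine, and correctly handled) density argument, but it buys something real: it guarantees that the indexed level of a node of $T^\ast$ agrees with its tree height and that the downward closure of an uncountable chain meets every limit level, which is exactly what your step ``pick $x_\delta\in B\cap\lev_\delta(T^\ast)$'' needs, and also what the paper's route tacitly uses (both the witness $\alpha<^\ast\beta_1$ above the meet in Claim \ref{normality} and the restriction $x\rest\delta$ in the proof of Claim \ref{specialisation}(1) presuppose such predecessors exist in the generic tree). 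One small polish: in the branch argument you should say explicitly that you pass to the downward closure of $B$ (or that $B$, being a maximal chain, is downward closed) and that $D^3$ is what makes $B\cap\lev_\delta(T^\ast)$ nonempty for limit $\delta$; as written that existence is asserted without comment.
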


\begin{proof} Clearly, for any filter $G$ we have that $T^\ast$ is a partial order on $\omega_1$. 
For every $\alpha<\omega_1$ we have that $\lev_\alpha(T^\ast)\subseteq [\omega\alpha, 
\omega\alpha +\omega)$, since the same is true for every $<_p$ for $p\in G$. In particular, $T^\ast$ is a tree.
It is a rooted tree since every $u^p$ for $p\in G$ has the same root. Let us observe that $T^\ast$ is normal, using the
following claim.

\begin{claim}\label{normality} Suppose that $\beta_0,\beta_1\in [\omega\delta, \omega\delta+\omega)\cap T^\ast$,
where $\delta$ is a limit ordinal. Then there is $\alpha \in T^\ast$ such $\alpha<^\ast\beta_l$ for exactly one $l<2$.
\end{claim}

\begin{proof} We can find $p\in G$ such that $\beta_0, \beta_1\in u^p$. Therefore $<_p$ fixes $\beta=\beta_0\cap_{<_p}
\beta_1$ and by the definition of the order in $\mathbb Q$ we must have $\beta=\beta_0\cap_{<^\ast}
\beta_1$. Any $\alpha<^\ast\beta_1$ with $\hgt(\alpha)>\hgt(\beta)$ satisfies the requirement.
\end{proof}

We now show that with a judicious choice of $\FF$ we have that $T^\ast$ is of height $\omega_1$.

\begin{claim}\label{density1} For every $\alpha<\omega_1$, the set $\DD_\alpha$ of all $p$ such that $u^p$ has an element on level $\alpha$ is dense.
\end{claim}

\begin{proof} Given $\alpha<\omega_1$, if $u^p$ has no elements on level $\alpha$, we shall first 
choose a $\gamma\in [\omega\alpha, \omega\alpha+\omega)$ and extend the order $<_p$ to 
$u^p\cup\{\gamma\}$ by letting
$\gamma$ be above the root $\langle\rangle$ of $u^p$ but such that $\beta\cap_{<^p} \gamma=\langle\rangle$ for all $\beta\in u^p$.
Since $u^p$ did not have any elements on level $\alpha$, neither does $v^p$, so we do not have to worry about extending $c$ to include pairs whose first coordinate is $\gamma$.
\end{proof}

We can conclude that $T^\ast$ is a normal $\omega_1$-tree. The next density claim will show that 
$c$ is defined on all $\bigcup_{\delta \mbox{  limit }<\omega_1} \lev_\delta (T^\ast)\times \lev_\delta (T) $ to $\omega$ and will therefore by Claim \ref{specialisation} (1) imply that $T^\ast\in \mathcal A$.

\begin{claim}\label{density2} Suppose that $\delta \mbox{  is a limit ordinal }<\omega_1$ and that there is $x$ of height 
$\delta$ in $u^p$. If $y\in T$ is of height $\delta$, then $p$ has an extension $q$ such that $y\in v^q$, in other words, the set $\EE_y=\{q:\,y\in v^q\}$ is dense above $p$ .
\end{claim}

\begin{proof} It suffices to let $v^q=v^p\cup \{y\}$ and to extend $c^p$ in a one-to-one way so that for any 
$x\in u^p$ of height $\delta$, the value
of $c^q(x,y)$ is different from any values taken by $c^p$.
\end{proof}

Let $\FF$ consist of all sets $\DD_\alpha$ for $\alpha<\omega_1$ and all sets $\mathcal E_y$ defined in and Claim \ref{density2}.

To finish the proof of Lemma $\ref{genericc}$ we have that $c$ is as required, since every $p$ satisfies the requirement from \ref{defQ}(4).
\end{proof}

\begin{lemma}\label{ccc} The forcing $\mathbb Q(T)$ is ccc.
\end{lemma}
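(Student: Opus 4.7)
I would prove Lemma \ref{ccc} by the classical $\Delta$-system plus pigeonhole argument used for ccc-ness of forcings with finite working parts, with a key appeal to the Aronszajn property of $T$ at the end.

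Given an uncountable $\{p_\xi : \xi < \omega_1\} \subseteq \mathbb Q(T)$, first apply the $\Delta$-system lemma to both $\{u^{p_\xi}\}_\xi$ and $\{v^{p_\xi}\}_\xi$, obtaining an uncountable $I_0 \subseteq \omega_1$ and common roots $u^\ast, v^\ast$. Successive pigeonholes then refine $I_0$ to an uncountable $I \subseteq I_0$ fixing: the cardinalities $|u^{p_\xi}|, |v^{p_\xi}|$; the isomorphism type of $(u^{p_\xi}, v^{p_\xi}, <_{p_\xi})$ relative to $(u^\ast, v^\ast)$, captured by a canonical bijection $\phi_\xi$ from a fixed finite template; and the pattern $(x, y) \mapsto c^{p_\xi}(\phi_\xi(x), \phi_\xi(y))$ of the coloring. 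A further thinning ensures that for distinct $\xi, \eta \in I$ the sets of heights of elements in $u^{p_\xi} \setminus u^\ast$ and $u^{p_\eta} \setminus u^\ast$ are disjoint.

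For $\xi < \eta$ in the thinned $I$ the candidate amalgamation is $r = p_\xi \cup p_\eta$: $u^r = u^{p_\xi} \cup u^{p_\eta}$ with $<_r$ the tree order extending $<_{p_\xi}$ and $<_{p_\eta}$ by declaring elements of $u^{p_\xi} \setminus u^\ast$ incomparable to those of $u^{p_\eta} \setminus u^\ast$ (cross-meets living in $u^\ast$); one sets $v^r$ and $c^r$ analogously, with $c^r$ a well-defined function by the pigeonhole on overlaps. Compatibility of $p_\xi$ and $p_\eta$ then reduces to verifying clause (4) of Definition \ref{defQ} for $c^r$: for every color collision $c^r(x_1, y_1) = c^r(x_2, y_2)$ with $(x_1, y_1) \ne (x_2, y_2)$ coming from the two sides $p_\xi, p_\eta$, one needs $x_1 \perp_{<_r} x_2$, $y_1 \perp_T y_2$, and $\hgt(x_1 \cap_{<_r} x_2) > \hgt(y_1 \cap_T y_2)$. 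The $x$-side incomparability and meet are forced by the construction, and collisions internal to either $p_\xi$ or $p_\eta$ are automatic from the fact that these are conditions.

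The crux is the residual collection of conditions living in $T$: for each ``new'' $y$-coordinate $y_j$ of the template, the set $\{\phi_\xi(y_j) : \xi \in I\}$ is uncountable in $T$, and for compatibility one needs $\phi_\xi(y_j) \perp_T \phi_\eta(y_j)$ with meet in $T$ strictly below the height $\gamma_j$ of the $u^\ast$-meet $\bar x_j$ of the corresponding $x$-coordinates. Since $T$ is Aronszajn, it has no uncountable chain, so one can thin to secure the required $T$-incomparabilities. I expect the main obstacle to be the \emph{strictness} of the meet-height inequality: a naive pigeonhole on the countably many elements of $T$ at height $\gamma_j$ could collapse all $\gamma_j$-th predecessors of the $\phi_\xi(y_j)$ to a single point, at which the inequality fails. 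Handling this cleanly probably requires either a preliminary density-driven enrichment of each $p_\xi$ raising the $u^\ast$-predecessors $\bar x_j$ to suitable heights (so that above $\bar x_j$ there is genuine branching among the $\phi_\xi(y_j) \rest \gamma_j$), or a more careful Fodor-style exploitation of the Aronszajn structure of $T$ to arrange that the $\gamma_j$-th predecessors genuinely vary along an uncountable subset of $I$. Once this final thinning is in place, a routine verification that $r$ satisfies all four clauses of Definition \ref{defQ} and extends both $p_\xi$ and $p_\eta$ completes the proof.
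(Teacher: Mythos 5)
There is a genuine gap, and it is exactly at the point you flag as ``the crux'' but then leave unresolved. Your amalgamation stipulates that cross-meets of the new $u$-elements ``live in $u^\ast$'', so any such meet has height at most $\gamma^\ast=\max(\hgt''u^\ast)$. But for a cross collision $c(x_1,y_1)=c(x_2,y_2)$ clause (\ref{mainpointQ}) of Definition \ref{defQ} demands $\hgt(x_1\cap x_2)>\hgt(y_1\cap_T y_2)$, and the $T$-side meets cannot in general be pushed below $\gamma^\ast$ by any thinning: the level $\lev_{\gamma^\ast}(T)$ is countable, so uncountably many of the new $y$-coordinates share the same restriction to that level (indeed they may all sit above a single node of any prescribed countable height), forcing $\hgt(y_1\cap_T y_2)\ge\gamma^\ast$ for the pairs you are left with. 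So neither of your two speculative repairs (enriching $p_\xi$ to raise the $u^\ast$-predecessors, or a Fodor-style argument making the $\gamma_j$-th predecessors vary) can work in general; the inequality is irreparable as long as the cross-meets are confined to the root. The paper's resolution is different and is the essential idea you are missing: the $u$-side order is part of the condition for the \emph{generic} tree, so one is free to adjoin, for each offending cross pair $(x_1,x_2)$, a brand-new node $w_{x_1,x_2}$ of \emph{successor} height $\beta$ with $\max(\gamma,\gamma^\ast)<\beta<\alpha(x_1,y_1)$, where $\gamma=\hgt(y_1\cap_T y_2)$, and to declare $w_{x_1,x_2}=x_1\cap_< x_2$; since the levels in question are limit and the new nodes have successor height, no new instances of clause (\ref{mainpointQ}) are created, and the order extends consistently. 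In short: one does not lower the $y$-meets, one raises the $x$-meets, which is possible only because the first tree is the one being built.

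A secondary gap: the room $\gamma<\alpha(x_1,y_1)$ needed above requires that the new $y$-coordinates of the two chosen conditions be pairwise incomparable in $T$, and you obtain this by ``thinning, since $T$ has no uncountable chain''. That step is not a routine thinning: if $T$ is Suslin there is no uncountable pairwise incomparable subfamily at all, and the lemma is meant to be a ZFC statement about $\mathbb Q(T)$. What suffices, and what the paper uses, is the Baumgartner--Malitz--Reinhardt lemma (Claim \ref{BMRlemma}, Jech, Lemma 16.18): from uncountably many pairwise disjoint finite subsets of a tree of size and height $\omega_1$ with no uncountable branch one can extract \emph{two} sets that are elementwise incomparable, which is all that ccc requires. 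Your $\Delta$-system and pigeonhole preprocessing is essentially the paper's (the paper additionally restricts conditions below a club of closure points and matches tails by order isomorphisms), but without the two ingredients above the compatibility verification does not go through.
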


\begin{proof} Suppose that $\langle p_\zeta:\,\zeta <\omega_1\rangle$ is a given sequence of elements of
$\mathbb Q(T)$. By extending each $p_\zeta$ if necessary, we can assume that for each $\zeta$ there is
an element of $v^{p_\zeta}$ and hence of  $u^{p_\zeta}$ of height $\zeta$. Let 
$C=\{\zeta <\omega_1:\, \omega\zeta=\zeta\}$, so a club of $\omega_1$.

For $\zeta\in C$ let us define $q_\zeta=p_\zeta\rest \zeta$, by which we mean:
\begin{enumerate}
\item $u^{q_\zeta}=u^{p_\zeta}\cap (\zeta\cup\{\langle\rangle\}), v^{q_\zeta}=v^{p_\zeta}\cap (\zeta\cup\{\langle\rangle\})$, 
\item $<_{q_\zeta}=<_{p_\zeta}\rest u^{q_\zeta}$ and
\item $c^{q_\zeta}=c^{p_\zeta} \rest ( u^{q_\zeta} \times v^{q_\zeta})$.
\end{enumerate}
There is a stationary set $S\subseteq C$, a condition $q^\ast$ and integers $n^\ast, m^\ast <\omega$ such that 
for every $\zeta\in S$ we
have:
\begin{enumerate}
\item  $q_\zeta=q^\ast$,
\item the size of $u^{p_\zeta}\setminus u^{q^\ast}$ is $n^\ast$ and  the size of $v^{p_\zeta}\setminus v^{q^\ast}$ is $m^\ast$. We enumerate
them increasingly as  ordinals in the form $\langle x^\zeta_i:\, i<n^\ast\rangle$ and $\langle y^\zeta_j:\, j<m^\ast\rangle$,
\item the value of $c^{p_\zeta} (x^\zeta_i,y^\zeta_j)$ and the fact that it is defined or not depends only on $i$ and $j$ and not on $\zeta$, and
\item letting $\gamma^\ast=\max (u^{q^\ast}\cup v^{q^\ast})$, we have $\min(u^{p_\zeta}\setminus u^{q^\ast} )>
\gamma^\ast+\omega$ and similarly for $v^{p_\zeta}\setminus v^{q^\ast}$.
\end{enumerate}
By thinning further, we may assume that for every $\varepsilon <\zeta$ in $S$,
\begin{itemize}
\item $u^{p_\varepsilon}\cup v^{p_\varepsilon}
\subseteq \zeta$,
\item the unique ordinal-order-preserving functions $f_{\varepsilon, \zeta}$ from $u^{p_\varepsilon}$ 
to $u^{p_\zeta}$ and $g_{\varepsilon, \zeta}$ from $v^{p_\varepsilon}$ 
to $v^{p_\zeta}$ give rise to an isomorphism between $p_\varepsilon$ to $p_\zeta$ which fixes $q^\ast$. In particular, it maps
$<_{p_\varepsilon}$ to $<_{p_\zeta}$ fixing $u^{q^\ast}$ and similarly for $<_T\rest v^{p_\varepsilon}$
and $<_T\rest v^{p_\zeta}$.
\item for every $\alpha\in v^{p_\zeta}\setminus v^{p_\varepsilon}$ we have that $\alpha\rest_{T} (\gamma^\ast+\omega)= g_{\varepsilon, \zeta}^{-1} (\alpha)\rest_T  (\gamma^\ast+\omega)$.
\end{itemize}
Let us now consider what could render two conditions $p_\varepsilon$ and $p_\zeta$ for $\varepsilon$ and $\zeta$ in $S$, incompatible. The minimum requirement on a condition $q$ with $q\ge p_\varepsilon, p_\zeta$
would be that $u^q\supseteq u^{p_\varepsilon}\cup u^{p_\zeta}$ and $v^q\supseteq v^{p_\varepsilon}\cup v^{p_\zeta}$. It may happen that there are $i<n^\ast$ and $j<m^\ast$ such that $x^\varepsilon_i \in 
u^{p_\varepsilon}\setminus \varepsilon$ and $y^\varepsilon_j \in 
v^{p_\varepsilon}\setminus \varepsilon$, so $x^\zeta_i \in 
u^{p_\zeta}\setminus \zeta$ and $y^\zeta_j \in 
v^{p_\zeta}\setminus \zeta$, such that $c(x^\varepsilon_i, y^\varepsilon_j)$ is defined, and hence 
$c(x^\zeta_i, y^\zeta_j)$ is defined and $c(x^\zeta_i, y^\zeta_j)=c(x^\varepsilon_i, y^\varepsilon_j)$. However, for all we know, $y^\varepsilon_j$ and $y^\zeta_j$ might be compatible in $T$ and therefore we run into a problem with the requirement (\ref{mainpointQ}) of Definition \ref{defQ} of the forcing. We shall solve this difficulty by invoking the following lemma, essentially due to Baumgartner, Malitz and Reindhardt \cite{BMR}, here taken from Jech's book \cite{Jec03}, where one can find a proof. In fact, although the book states the Claim in terms of Aronszajn trees, the same proof works for any tree of height and cardinality $\omega_1$, as long as the tree does not have an uncountable branch. We shall use that fact in \S\ref{wtn}, so we state the claim in these terms.

\begin{claim}[\cite{Jec03}, Lemma 16.18]\label{BMRlemma} If $\mathbf T$ is 
tree of height and cardinality $\omega_1$ with no uncountable branches and $W$ is an uncountable collection of finite pairwise disjoint subsets of $\mathbf T$, then there exist $s, s'\in W$ such that 
any $x\in s$ is incomparable with any $y\in s'$.
\end{claim}

We can now apply Claim \ref{BMRlemma} to find $\varepsilon<\zeta$ both in $S$ such that 
any $y^\varepsilon_j$ is incomparable with any $y^\zeta_{j'}$. Now we claim that $p_\varepsilon$ and $p_\zeta$
are compatible. Let us start by defining $v=v^{p_\varepsilon} \cup v^{p_\zeta}$ and 
$u'=u^{p_\varepsilon} \cup u^{p_\zeta}$. In order to get a condition we shall have to extend $u'$ and also define 
$<$, but note already that if $\alpha\in v$, then there is an element of height $\hgt(\alpha)$ in $u'$, since the analogue is true about $u^{p_\varepsilon}$ and $u^{p_\zeta}$. So conditions 1. and 2. of Definition \ref{defQ} are easy to
fulfil and it is condition 4. that is difficult. Once we fulfil it, that Condition 3. will follow from the proof.

Our choices so far imply that $c=c^{p_\varepsilon} \cup c^{p_\zeta}$ is a well defined function. 
In order to use it to fulfil condition 4. of Definition \ref{defQ}, we have to check through all the pairs $(x_1, y_1)\neq (x_2, y_2)$ in $\bigcup_{\delta\mbox{ limit }<\omega_1} \lev_\delta(u')\times \lev_\delta(v)$  such that $c(x_1, y_1)=c(x_2, y_2)$. If $(x_1, y_1), (x_2, y_2)$ are both in 
$\dom(c^{p_\varepsilon})$ or both are in $\dom(c^{p_\zeta})$, then the condition 4. is satisfied for them, so the interesting case is when they are not. 

Therefore $\alpha(x_1, y_1)\neq \alpha(x_2, y_2)$, and let us suppose, without loss of generality, that $\alpha(x_1, y_1)< \alpha(x_2, y_2)$.
Then necessarily $(x_1, y_1)\in \dom(c^{p_\varepsilon})
\setminus \dom(c^{p_\zeta})$ and
$(x_2, y_2)\in \dom(c^{p_\zeta})\setminus  \dom(c^{p_\varepsilon})$. We have assured that this implies that 
$y_1$ and $y_2$ are incompatible in $T$. Let $\gamma=\hgt(y_1\cap_T y_2)$, so $\gamma<\alpha(x_1, y_1)$.
So far we know nothing about $x_1\cap x_2$ since neither $<_{p_\varepsilon}$ nor $<_{p_\zeta}$ have the pair $(x_1, x_2)$ in its domain. Knowing that $\alpha(x_1, y_1)$ is a limit ordinal, we are going to choose a successor
ordinal $\beta_{x_1, x_2}$
above $\max(\gamma, \gamma^\ast)$ and below $\alpha(x_1, y_1)$ and an ordinal $w_{x_1, x_2}$ of height 
$\beta_{x_1, x_2}$ which is not $<_{p_\varepsilon}$ above any element of $u^{p^\varepsilon}$. We shall add 
$w_{x_1, x_2}$ to $u'$ and declare $w_{x_1, x_2}=x_1\cap_{<} x_2$. We do this for all
pairs relevant to condition 4., by induction on the number of such pairs, each time avoiding all interaction with what we have already chosen. At the end let $u$ be the union of $u'$ and the set of all such $w_{x_1, x_2}$.
Since the new elements are all of successor height, this will not bring us in danger of creating new instances of
condition 4. Finally, to fulfil condition 3. we need to extend $<_{p_\varepsilon} \cup <_{p_\zeta}$ to a partial order 
$<$ on $u$ which will respect the commitments on $\cap_{<}$ which we have just made, which is possible by the way we chose $\beta_{x_1, x_2}$.

Then the condition $q=(u, v, <, c)$ is a common extension of $p_\varepsilon, p_\zeta$.
\end{proof}

\begin{proof} (of Theorem \ref{nounivAron}) To finish the proof, we suppose that we are in a model of $MA(\omega_1)$ and that $T$ is an Aronszajn tree.  Without generality, passing to a weakly bi-embeddable copy and adding a root if necessary, we can assume that $T$ is rooted and normal.Then by forcing by the ccc forcing $\mathbb Q (T)$ (Lemma \ref{ccc}) and intersecting 
$\aleph_1$ many dense sets $\DD_\alpha$ for $\alpha <\omega_1$ (Claim \ref{density1}) and $\EE_y$ for $y\in T$ (Claim \ref{density2}),
we obtain that the generic Aronszajn tree $T^\ast$ does not weakly embed into $T$ (Lemma \ref{genericc} and Claim \ref{specialisation}(2)). Therefore, $T$ is not universal, and since $T$ is arbitrary, the theorem is proved.
\end{proof}

\begin{remark}\label{} Theorem \ref{nounivAron} gives another proof of the main result of \cite{BMR}, which is that under $MA(\omega_1)$ all Aronszajn trees are special and another proof of Theorem \ref{Toto}.
\end{remark}

\section{Embedding wide Aronszajn trees into Aronszajn trees}\label{wtn} This section is devoted to the proof of the following theorem:

\begin{theorem}\label{wideintonormal} For every tree $T\in\mathcal \TT$, there is a ccc forcing $\mathbb P=\mathbb P(T)$
and a family $\HH=\HH(T)$ of $\aleph_1$ many dense sets in  $\mathbb P$, such that 
every $\HH$-generic filter adds a
tree in $\mathcal A$ into which $T$ weakly embeds. In particular, under the assumption of $MA(\omega_1)$,
the class $\mathcal A$ is cofinal in the class $(\TT, \le)$.
\end{theorem}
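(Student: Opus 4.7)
The plan is to construct a normal Aronszajn tree $T^\ast \in \mathcal A$ and a level-preserving weak embedding $f : T \to T^\ast$ simultaneously, via a forcing $\mathbb P = \mathbb P(T)$ of finite approximations. A condition will be a quintuple $p = (u^p, v^p, <_p, f^p, c^p)$ where $u^p \subseteq \omega_1 \cup \{\langle\rangle\}$ is finite with $\langle\rangle \in u^p$ and every non-root element indexed at level $\alpha$ lies in $[\omega\alpha, \omega\alpha + \omega)$; $<_p$ is a tree order on $u^p$ with root $\langle\rangle$ respecting heights and in which every pair has a $<_p$-meet in $u^p$; $v^p \subseteq T$ is finite and $f^p : v^p \to u^p$ is level- and strict-order-preserving, i.e.\ $\hgt_T(x) = \hgt(f^p(x))$ and $x <_T y \Rightarrow f^p(x) <_p f^p(y)$; and $c^p : u^p \to \omega$ is a specialising function, i.e.\ $c^p(\alpha) = c^p(\beta)$ with $\alpha \neq \beta$ forces $\alpha$ and $\beta$ to be $<_p$-incomparable. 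Extension is inclusion with meets preserved.

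The relevant density sets are $\DD_\alpha = \{p : u^p \text{ has an element of level } \alpha\}$ for $\alpha < \omega_1$, and $\EE_t = \{p : t \in v^p\}$ for $t \in T$. Density of $\DD_\alpha$ is achieved by placing a fresh ordinal on level $\alpha$ directly above $\langle\rangle$ with a fresh $c$-value; density of $\EE_t$ by first extending $u^p$, if necessary, to contain a fresh ordinal of height $\hgt_T(t)$ above $f^p(t')$ for every $t' <_T t$ already in $v^p$ --- such $t'$ are pairwise $T$-comparable since they all lie below $t$, so their $f^p$-images are pairwise $<_p$-comparable and admit a maximum above which we can place the new point --- and then setting $f^p(t)$ to this point. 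Intersecting the $\aleph_1$-sized family $\HH = \{\DD_\alpha : \alpha < \omega_1\} \cup \{\EE_t : t \in T\}$ makes the generic $T^\ast$ of height $\omega_1$ with countable levels by the indexing, specialised by $c : T^\ast \to \omega$ and hence Aronszajn; normality of $T^\ast$ follows verbatim from the argument of Claim \ref{normality}, so $T^\ast \in \mathcal A$ with $f : T \to T^\ast$ a weak embedding.

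The main obstacle is the ccc argument for $\mathbb P(T)$. Given $\langle p_\zeta : \zeta < \omega_1 \rangle$, extend each $p_\zeta$ so that it contains an element of height $\zeta$, restrict to $\zeta \in C = \{\zeta : \omega\zeta = \zeta\}$, and perform the $\Delta$-system / pressing-down reduction of Lemma \ref{ccc}: pin down the root part $q^\ast = p_\zeta \rest \zeta$, the tail sizes $n^\ast, m^\ast$ with their increasing enumerations $\langle x^\zeta_i : i < n^\ast \rangle$ and $\langle y^\zeta_j : j < m^\ast \rangle$, the internal isomorphism type, the $c$-values on tails, and the $f$-linking pattern $f^{p_\zeta}(y^\zeta_j) = x^\zeta_{i(j)}$, all for $\zeta$ ranging over a stationary $S$. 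Two obstructions to amalgamating $p_\varepsilon$ and $p_\zeta$ remain. Coincidences of $c$-values between corresponding tail elements impose $<$-incomparability in the amalgam, while $T$-comparabilities between some $y^\varepsilon_j$ and $y^\zeta_{j'}$ would, via the weak-embedding requirement, impose $<$-comparability on $x^\varepsilon_{i(j)}$ and $x^\zeta_{i(j')}$, typically conflicting with the first. We eliminate the second by applying Claim \ref{BMRlemma} to $T$ (legitimate since $T \in \TT$ is a tree of height and cardinality $\omega_1$ with no uncountable branch) and the family $\{v^{p_\zeta} \setminus v^{q^\ast} : \zeta \in S\}$: this produces $\varepsilon < \zeta$ in $S$ with every $y^\varepsilon_j$ $T$-incomparable with every $y^\zeta_{j'}$. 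All cross $f$-constraints then become vacuous, so one can declare the new parts of $u^{p_\varepsilon}$ and $u^{p_\zeta}$ pairwise $<$-incomparable in the amalgam, with all their cross-meets collapsing into $u^{q^\ast}$, which preserves the specialising condition. This yields a common extension and ccc, after which $MA(\omega_1)$ applied to $\HH$ produces $T^\ast$ and $f$ as required. I expect the delicate point to be the bookkeeping in the amalgamation --- verifying that the chosen extension of $<_{p_\varepsilon} \cup <_{p_\zeta}$ creates no new $c$-coincidences and is compatible with the $f$-images having to land at the specified levels.
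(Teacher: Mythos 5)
Your forcing goes in the same general direction as the paper's $\mathbb P(T)$ (finite approximations to an Aronszajn tree together with a level-preserving partial embedding of $T$ and a specialising function, plus a BMR-style incomparability argument for ccc), but it omits the one requirement that makes the construction work, and as a result your density claim for $\EE_t$ is false. In your conditions the finite piece of $T$ is not required to be closed under meets in $T$, and your density argument for $\EE_t$ only takes into account the points of $v^p$ lying $T$-\emph{below} $t$, placing a fresh node above the maximum of their images. But points of $v^p$ lying $T$-\emph{above} $t$ impose upper constraints on $f^q(t)$, and these can be unsatisfiable. Concretely: since you do not require $f^p$ to be injective, a condition may contain incomparable $\tau,\sigma\supset\rho$ (with $\rho\notin v^p$) together with $\rho'\subset\tau'$, $\rho''\subset\sigma'$ at level $\lg(\rho)$ such that $f^p(\tau)=f^p(\tau')$, $f^p(\sigma)=f^p(\sigma')$ but $f^p(\rho')\neq f^p(\rho'')$; because your $<_q$ is a height-respecting tree order, any extension with $\rho$ in its domain must set $f^q(\rho)$ equal to \emph{both} $f^p(\rho')$ and $f^p(\rho'')$, so no extension exists and $\EE_\rho$ is not dense above that condition. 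Even more simply, your stipulation that extensions preserve meets already blocks the case of two incomparable $\tau,\sigma\supset\rho$ whose images have their meet below level $\lg(\rho)$: the forced new common predecessor $f^q(\rho)$ would raise that meet. So with your definition the generic $f$ need not be defined on all of $T$.

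This is exactly the issue the paper isolates: Definition \ref{defP} requires the $T$-side of a condition to be closed under intersections, the ``main point'' in Claim \ref{domain} shows that the forced identifications are then consistent (one is \emph{obliged} to reuse an old image rather than take a fresh node, and closure under intersections rules out conflicting obligations), and in the ccc proof the preparatory restriction-closure hypothesis {\bf (b)} plus the ``dangerous configuration'' analysis of Case 2 handle the re-closing of the union of two conditions under meets. Your ccc sketch inherits the same omission: you assert that after the BMR step all cross $f$-constraints are vacuous and the new parts can simply be declared incomparable, which is plausible only because your conditions carry no meet-closure requirement; once that requirement is added (as it must be, to rescue density), the amalgam has to be closed under intersections on the $T$-side, new domain points must be given $f$-values, and the Case 1--4 analysis of Lemma \ref{cccp} becomes the real content of the ccc proof. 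As it stands, the proposal is missing the central idea of the argument rather than just bookkeeping.
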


Following the pattern from Section \S\ref{sec:Aron}, we shall break the proof into the definition of the forcing and then several lemmas needed to make the desired conclusion. The forcing is dual to the one in 
 \S\ref{sec:Aron}, in the sense that we now start with a tree $T$ in  $\TT$ and generically add an Aronszajn tree that $T$ weakly embeds to. We use the control function $c$ to make sure that the generic tree does not 
 have an uncountable branch. 

For the definition of the forcing, we represent every $T\in \TT$ by an isomorphic copy which is a subtree of
$ {}^{\omega_1>} \omega_1$.

\begin{definition}\label{defP}
Suppose that $T \subseteq {}^{\omega_1>} \omega_1$ is a tree of size $\aleph_1$ and with no uncountable branches, we define a forcing notion 
$\mathbb P=\mathbb P(T)$ to consist of all $p=(u^p, v^p, <_p, f^p, c^p)$
such that:
\begin{enumerate}
\item $u^p\subseteq T$, $v^p\subseteq \omega_1$ are finite and $\langle\rangle\in u^p$,
\item $u^p$ is closed under intersections,
\item $<_p$ is a partial order on $v^p$,
\item $f^p$ is a surjective weak embedding from $(u^p,\subset)$ onto $(v^p, <_p)$,
%
%
\item for every $\eta\in u^p$, we have $\hgt(f^p(\eta))=\lg(\eta)$ (notice that $\lg(\eta)=\hgt_T(\eta)$, since $\eta$ is a sequence of ordinals),
\item\label{mainpoint} $c^p$ is a function from $v^p$ into $\omega$ such that 
 \[
\alpha<_p\beta\implies c^p(\alpha)\neq c^p(\beta).
 \]
\end{enumerate}

The order $p\le q$ on $\mathbb P$ is given by inclusion: $u^p\subseteq u^q, v^p\subseteq v^q,
<_p\subseteq <_q$ and $c^p\subseteq c^q$.

\end{definition}

\begin{lemma}\label{genericcp} There is a family $\HH$ of $\aleph_1$-many dense subsets of $\mathbb P$
such that for any $G$ which is $\HH$-generic, letting
\[
T^\ast=\bigcup\{<_p:\,p\in G\}, f= \bigcup\{f^p:\,p\in G\}, \mbox{ and }  c=\bigcup\{c^p:\,p\in G\},
\]
we have that $T^\ast$ is an Aronszajn tree, $f$ is a level-preserving weak embedding of $T$ into $T^\ast$,
$c:\,T^{\ast}\into \omega$
and $\alpha<_{T^{\ast}} \beta \implies c(\alpha)\neq c(\beta)$ .
\end{lemma}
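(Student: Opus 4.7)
My plan is to take $\HH := \{\DD_\eta : \eta \in T\}$ where $\DD_\eta := \{p \in \mathbb{P}(T) : \eta \in u^p\}$. Since $|T| = \aleph_1$, this is a family of $\aleph_1$ dense sets, and every conclusion of the lemma will follow by genericity.

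The main work is the density verification. Given $p \in \mathbb{P}(T)$ with $\eta \notin u^p$, I would extend to $q \ge p$ by setting $u^q := u^p \cup \{\eta\} \cup \{\eta \cap \eta' : \eta' \in u^p\}$, which is finite and still closed under intersections. For each $\xi \in u^q \setminus u^p$ I pick a fresh ordinal $\alpha_\xi \in [\omega\lg(\xi),\omega\lg(\xi)+\omega)$ outside $v^p$ and distinct from the previously chosen $\alpha_{\xi'}$'s. Set $v^q := v^p \cup \{\alpha_\xi : \xi \in u^q \setminus u^p\}$, $f^q := f^p \cup \{(\xi,\alpha_\xi) : \xi \in u^q \setminus u^p\}$, and let $<_q$ be the partial order on $v^q$ generated by $<_p$ together with the pairs $(f^q(\xi_1), f^q(\xi_2))$ for $\xi_1 \subsetneq \xi_2$ in $u^q$. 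Finally, extend $c^p$ to $c^q$ by assigning each new ordinal a natural number outside $c^p[v^p]$, pairwise distinct. The freshness choices make the verification of Definition \ref{defP} routine.

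Given an $\HH$-generic $G$, set $T^* := \bigcup_{p \in G} v^p$, $<_{T^*} := \bigcup_{p\in G}<_p$, $f := \bigcup_{p \in G} f^p$, and $c := \bigcup_{p \in G} c^p$. Density of $\DD_\eta$ makes $f$ total on $T$, and condition (5) of Definition \ref{defP} makes it level-preserving. For $\eta_1 \subsetneq \eta_2$ in $T$, the filter property gives $p \in G$ with both in $u^p$, whence $f(\eta_1) <_{T^*} f(\eta_2)$; so $f$ is a weak embedding. Each $c^p$ is total on $v^p$, so $c$ is total on $T^*$, and $\alpha <_{T^*}\beta \Rightarrow c(\alpha)\neq c(\beta)$ is immediate from condition (6).

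It remains to check that $T^*$ is an Aronszajn tree. Its $\alpha$th level lies in the countable interval $[\omega\alpha,\omega\alpha+\omega)$, so all levels are countable; its height is $\omega_1$ because $T$ has height $\omega_1$ and, for any $\eta$ of length $\alpha$, meeting $\DD_\eta$ places $f(\eta)$ at level $\alpha$; and $c$ is injective on any branch while taking values in $\omega$, so no branch is uncountable. The most delicate point, which I expect to be the main obstacle, is checking that $T^*$ is actually a tree, i.e.\ that each element has a well-ordered predecessor set. Here the fresh-ordinal convention pays off: an induction along $G$ shows that each $\alpha \in T^*$ was introduced as the image of a unique $\xi \in T$, so $f$ is injective in the generic, and if $\alpha = f(\eta_\alpha)$ then the predecessors of $\alpha$ in $T^*$ are precisely $\{f(\eta') : \eta' \subsetneq \eta_\alpha\}$, well-ordered since $T$ is a tree.
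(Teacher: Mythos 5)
There is a genuine gap, and it sits exactly at what the paper's own proof flags as ``the main point''. Your density argument always assigns a \emph{fresh} ordinal $\alpha_\xi$ to every new node $\xi$, and your verification that $T^\ast$ is a tree rests on the claim that the generic $f$ is injective. That claim cannot hold: $T$ is a wide Aronszajn tree, so it may have uncountable levels, while by condition (5) of Definition \ref{defP} and surjectivity of the $f^p$ every level $\lev_\alpha(T^\ast)$ is contained in the countable set $[\omega\alpha,\omega\alpha+\omega)$. Since meeting your sets $\DD_\eta$ makes $f$ total and level-preserving, $f$ must identify uncountably many nodes of a given level of $T$; hence every $\HH$-generic filter contains conditions with non-injective $f^p$, and your ``induction along $G$'' (which tacitly assumes that every condition in $G$ arises from your particular extension recipe) collapses, together with the description of the predecessors of a node of $T^\ast$ as the images of the initial segments of a unique preimage.

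Concretely, the case your extension ignores is: $\eta\notin u^p$, there is $\tau\in u^p$ with $\eta\subset\tau$, and there are $\rho',\tau'\in u^p$ with $\lg(\rho')=\lg(\eta)$, $\rho'\subset\tau'$ and $f^p(\tau')=f^p(\tau)$ --- a configuration that is possible precisely because $f^p$ need not be injective. Giving $\eta$ a fresh value $\alpha_\eta$ and adding $\alpha_\eta<_q f^p(\tau)$ does produce a legal condition (so your $\DD_\eta$ are indeed dense), but now $f^p(\tau)$ has two distinct predecessors of the same height $\lg(\eta)$, namely $\alpha_\eta$ and $f^p(\rho')$, and nothing in $\HH$ ever makes two ordinals of the same height comparable; so in the generic object the predecessors of $f(\tau)$ need not be linearly ordered and $T^\ast$ need not be a tree. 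The paper's proof of Claim \ref{domain} avoids this by being \emph{obliged}, in this case, to reuse the old value and set $f^q(\eta)=f^p(\rho')$, using the closure of $u^p$ under intersections to check that this reuse is consistent (no conflicting $\rho''$ can force a different value); the same care is needed for the intersection nodes you add to $u^q$. So while your choice of dense sets and the rest of the architecture coincide with the paper's, the omission of this reuse step leaves the tree-ness of $T^\ast$ unproved, and the injectivity argument you substitute for it is not repairable.
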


\begin{proof} Clearly, for any filter $G$ the set $T^\ast$ is a partial order on a subset of $\omega_1$, $c$ is a well defined function into
$\omega$ and $f$ is a function from a subset of $T$ into $T^\ast$ which is a weak embedding of its domain
into its range. In addition, $f$ is level-preserving in the sense that for all $\eta\in \dom(f)$ we have
 $\hgt(f(\eta))=\lg(\eta)$ and $c$ satisfies $\alpha<_{T^{\ast}} \beta \implies c(\alpha)\neq c(\beta)$. To finish the proof of the Lemma, we prove the following three claims.

\begin{claim}[Density Claim]\label{domain} There is a set $\HH$ of $\aleph_1$ many dense subsets of $\mathbb P$
such that if $G$ is $\HH$-generic, then
domain of $f^\ast$ is $T$.
\end{claim}

\begin{proof} Let $\rho\in T$, we shall show that $\EE_\rho=\{p\in \mathbb P:\, \rho\in\dom(f^p)\}$ is dense.
Suppose that $p\in P$ is given and suppose that $p\not\in \EE_\rho$. We shall define an
extension $q$ of $p$ which is in $\EE_\rho$. Let us define $u^q_0=u^p\cup\{\rho\}$. Let $\alpha=\lg(\rho)$.  We shall first extend $f^p$ to $u^q_0$. For the ease of reading, we divide the proof into steps.

\smallskip

{\noindent (1)} The first case is that either there is no $\tau\in u^p$ with
$\rho\subset\tau$, or that there are such $\tau$ but there is no $\tau', \rho'\in u^p$
such that $\lg(\rho')=\alpha$,
$\rho'\subset \tau'$ and $f^p(\tau')=f^p(\tau)$.
In this case choose $\gamma\in  [\omega\alpha, \omega\alpha+\omega)\setminus v^p$ and define $v^q_0=v^p\cup \{\gamma\}$, $f^q(\rho)=\gamma$. Let $\gamma>_q\beta$ for any $\beta=f^p(\sigma)$ for
some $\sigma\subset \rho$ and $\gamma<_q\delta$ for any $\delta=f^p(\tau)$ for $\rho\subset \tau$ and
$\tau\in u^p$. Then the relation $<_q$ is a partial order. We let $c^q(\gamma)$ be 
any value in $\omega$ not taken by $c^p$. 

\smallskip

{\noindent (2)} This step is {\bf the main point}.
It is that there is $\tau\in u^p$ with
$\rho\subset\tau$ and $\tau', \rho'\in u^p$ such that $\lg(\rho')=\alpha$,
$\rho'\subset \tau'$ and $f^p(\tau')=f^p(\tau)$. In this case we shall have $v^q_0=v^p, <_q^0=<_p$ and $c^q=c^p$, so let us show how to extend $f^p$ to $f^q$. Let $\tau$ be of the least length among all $\tau$s as in the assumption of this case.
We are then obliged to let $f^q(\rho)= f^p(\rho')$, since
$f^p(\tau)$ can have only one restriction to the level $\alpha$ and $f^p(\rho')$ is already such a restriction.
Note that for any $\tau'', \rho'' \in u^p$ such that $\lg(\rho'')=\alpha$, $\rho''\subset \tau''$, $f^p(\tau'')=f^p(\tau)$, we must have $f^p(\rho'')=f^p(\rho')$ since $f^p$ is a weak embedding. However, there is a {\em possible problem}:
there could be $\sigma, \sigma'$ and $\rho''$ such that  $\lg(\rho'')=\alpha$,  $\rho\subset \sigma$, $\rho''\subset \sigma'$, 
$f^p(\sigma)=f^p(\sigma')$, which would force us to have $f^p(\rho)= f^p(\rho'')$, but maybe $f^p(\rho'')\neq  f^p(\rho')$. Luckily, this cannot happen since {\em $u^p$ is closed under intersections}, so for any such $\sigma$
we would have $\rho=\sigma\cap\tau\in u^p$, which is not the case. In fact, any $\sigma\in u^p$ with 
$\rho\subset \sigma$ must satisfy $\tau\subseteq \sigma$. 

\

\begin{center}
\begin{tikzpicture}
\node at (4.25,0) {Main point}; 
\draw[line width=0.5, -] (0,1) -- (2,1);
\node at (1,0.7) {${\rm lev}_\alpha(T)$}; 
\node at (4,1) {\scriptsize $\bullet$}; 
\node at (6,1) {\scriptsize $\bullet$}; 
\node at (8,1) {\scriptsize $\bullet$}; 
\node at (3.5,3) {\scriptsize $\bullet$}; 
\node at (5,3) {\scriptsize $\bullet$}; 
\node at (7,3) {\scriptsize $\bullet$}; 
\node at (8.5,3) {\scriptsize $\bullet$}; 
\draw[line width=0.5, -] (4,1) -- (3.5,3);
\draw[line width=0.5, -] (6,1) -- (5,3);
\draw[line width=0.5, -] (6,1) -- (7,3);
\draw[line width=0.5, -] (8,1) -- (8.5,3);
\draw[line width=0.5, dotted] (3.5,3) to[out=45, in=135]  (7,3);
\draw[line width=0.5, dotted] (5,3) to[out=45, in=135]  (8.5,3);
\node at (4,0.7) {$\rho'$}; 
\node at (6,0.7) {$\rho$}; 
\node at (8,0.7) {$\rho''$}; 
\node at (3.2,3) {$\tau'$}; 
\node at (4.7,3) {$\sigma$}; 
\node at (7.3,3) {$\tau$}; 
\node at (8.8,3) {$\sigma'$}; 
\end{tikzpicture}
\end{center}

\smallskip

{\noindent (3)} Now we know what $f^q(\rho)$ is and we have to discuss the closure under intersections. If there is $\tau\in u^p$ with $\rho\subset \tau$,
then taking such $\tau$ of minimal length, we have that for every $\sigma\in u^p$, $\rho\cap\sigma=\tau\cap\sigma$, by the minimality of the length of $\tau$ and the fact that $u^p$ is closed under intersections. In this case we let $u^q=u^q_0$ and $v^q=v^q_0$ and we are done. So suppose that there is no such $\tau$. Let $\sigma\in u^p$ be the longest initial segment of $\rho$ which is in $u^p$, which exists since
$u^p$ is finite and it contains $\langle\rangle$. Then, if there are intersections of the elements of $u^q_0$
which are not already be in $u^q_0$, they must be of the form $\tau\cap\rho$ for some $\tau\in u^p$ with
$\sigma\subset \tau$. Moreover, by the closure of $u^p$ under intersections and the choice of $\sigma$, there is a single 
$\tau\in u^p$ of least length which satisfies $\tau\subseteq \tau'$ for any other such $\tau'$.
We then add $\tau\cap\rho$ to $u^q_0$ to form $u^q$ and we note that this 
set is now closed under intersections. If $u^q=u^q_0$, then we are done. Otherwise, $u^q\setminus u^q_0$ is a singleton
and let $\beta$ be such that the unique element of $u^q\setminus u^q_0$ of length $\beta$.
We then choose an ordinal $\gamma_\beta\in [\omega, \omega\beta+\omega)\setminus \ran(f^p)$ and we let $f^p(\sigma)<_q \gamma_{\beta} 
<_q f^q(\rho)$. We extend $<_q$ by transitivity. Finally we
choose an element $c_\beta\in \omega\setminus \ran(c^q_0)$ and let $c^q(\gamma_\beta)=
c_\beta$.

To finish the proof of the claim, let $\HH$ consist of all $\mathcal E_\rho$ for $\rho\in T$.
\end{proof}

\begin{claim}\label{levels}
For every $\alpha<\omega_1$ we have that $\lev_\alpha(T^\ast)\subseteq [\omega\alpha, 
\omega\alpha +\omega)$ and $T^\ast$ has size 
$\aleph_1$. 
\end{claim}

\begin{proof} It follows from the definition of the forcing that 
\[
\ran(f^p\rest (\lev_\alpha(T))\subseteq [\omega\alpha, 
\omega\alpha +\omega) 
\]
for every $p\in {\mathbb P}$. That every $\lev_\alpha(T)$ is non-empty follows 
from Claim \ref{domain}.
\end{proof}

We can conclude that $T^\ast$ is an $\omega_1$-tree. By genericity we have that the domain of $c$ is $T^\ast$ and that $c:\,T^\ast\to\omega$ satisfies $\alpha<_p\beta\implies c^p(\alpha)\neq c^p(\beta)$. 

\begin{claim}\label{cbranches} $T^\ast$ has no uncountable branch.
\end{claim}

\begin{proof} This is an easy consequence of the properties of $c$, namely $c$ is 1-1 on any branch, and its range is a subset of $\omega$.
\end{proof}

Therefore $T^\ast$ is an Aronszajn tree. To finish the proof of the lemma, it remains to verify that 
$f:\,T\into T^\ast$ is a weak embedding, which follows from the genericity.
\end{proof}

\begin{lemma}\label{cccp} The forcing $\mathbb P(T)$ is ccc.
\end{lemma}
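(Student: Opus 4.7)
The argument should mirror the proof of Lemma \ref{ccc}, with the $\Delta$-system and stationary-thinning done on the combinatorial data of conditions in $\mathbb{P}(T)$ rather than $\mathbb{Q}(T)$. Start from a putative antichain $\langle p_\zeta:\,\zeta<\omega_1\rangle\subseteq\mathbb P(T)$. After extending if necessary so that $u^{p_\zeta}$ contains an element of length at least $\zeta$, pass to the club $C=\{\zeta<\omega_1:\omega\zeta=\zeta\}$ and consider the truncations $q_\zeta=p_\zeta\rest\zeta$ (keep only the elements of $u^{p_\zeta}$, $v^{p_\zeta}$ whose length/height is $<\zeta$, and restrict $<_{p_\zeta}$, $f^{p_\zeta}$, $c^{p_\zeta}$ accordingly). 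By Fodor, find a stationary $S\subseteq C$ and a fixed condition $q^\ast$ with $q_\zeta=q^\ast$ for all $\zeta\in S$. Thin further so that $|u^{p_\zeta}\setminus u^{q^\ast}|=n^\ast$ and $|v^{p_\zeta}\setminus v^{q^\ast}|=m^\ast$ are constant with the new elements enumerated compatibly, and so that the natural index-preserving bijections give an isomorphism of the combinatorial data of $p_\varepsilon$ and $p_\zeta$ over $q^\ast$ (preserving $\subseteq$-structure in $T$, the orders $<_{p_\zeta}$, the maps $f^{p_\zeta}$, and the colorings $c^{p_\zeta}$); also arrange that the ordinals in $v^{p_\zeta}\setminus v^{q^\ast}$ sit above $\max(u^{q^\ast}\cup v^{q^\ast})+\omega$.

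Now apply Claim \ref{BMRlemma} to the tree $T$ itself (which has height and cardinality $\omega_1$ and, being in $\TT$, has no uncountable branch) with the uncountable family $W=\{u^{p_\zeta}\setminus u^{q^\ast}:\,\zeta\in S\}$ of finite pairwise disjoint subsets. This yields $\varepsilon<\zeta$ in $S$ such that every $\eta\in u^{p_\varepsilon}\setminus u^{q^\ast}$ is $T$-incomparable with every $\eta'\in u^{p_\zeta}\setminus u^{q^\ast}$. For such a pair construct a common extension $q$ as follows. Let $u^q$ be the closure of $u^{p_\varepsilon}\cup u^{p_\zeta}$ under intersections in $T$; any $\rho\in u^q\setminus(u^{p_\varepsilon}\cup u^{p_\zeta})$ must be of the form $\rho=\eta\cap_T\eta'$ with $\eta$ new in $p_\varepsilon$ and $\eta'$ new in $p_\zeta$, and by the BMR incomparability $\rho$ is a proper initial segment of both. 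For each such $\rho$ choose a fresh ordinal $\gamma_\rho\in[\omega\lg(\rho),\omega\lg(\rho)+\omega)\setminus(v^{p_\varepsilon}\cup v^{p_\zeta})$, set $f^q(\rho)=\gamma_\rho$, add $\gamma_\rho$ to $v^q=v^{p_\varepsilon}\cup v^{p_\zeta}\cup\{\gamma_\rho:\rho\text{ new}\}$, define $<_q$ as the transitive closure of $<_{p_\varepsilon}\cup<_{p_\zeta}$ augmented by the relations $f^q(\rho_1)<_q f^q(\rho_2)$ forced by $\rho_1\subsetneq\rho_2$ in $u^q$, and set $c^q(\gamma_\rho)$ to be a value in $\omega$ not used by $c^{p_\varepsilon}\cup c^{p_\zeta}$ or by any previously chosen $c^q(\gamma_{\rho'})$. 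On the common domain $v^{q^\ast}$ the two orders and colorings agree by the isomorphism, so $<_q$, $f^q$, $c^q$ are well-defined.

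The main obstacle is the verification that $q$ really is a condition in $\mathbb P(T)$, and in particular that clauses (3)--(6) of Definition \ref{defP} survive the amalgamation. The key point is that the BMR-incomparability of the new $T$-elements of $p_\varepsilon$ and $p_\zeta$ \emph{prevents} any forced $<_q$-comparability between an element of $v^{p_\varepsilon}\setminus v^{q^\ast}$ and an element of $v^{p_\zeta}\setminus v^{q^\ast}$: any chain in $u^q$ from a new-$\varepsilon$ element to a new-$\zeta$ element would have to pass through their $T$-intersection, which lies strictly below both and in $u^q$, and hence the only new $<_q$-relations among the new ordinals go via the freshly added $\gamma_\rho$'s that receive fresh colors. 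This simultaneously rules out cycles in $<_q$, ensures $f^q$ remains a weak embedding on $u^q$ (since the $\subseteq$-structure of $u^q$ is reflected into $<_q$ by construction), and preserves clause (\ref{mainpoint}) of Definition \ref{defP}, because all ``old'' comparabilities retain their original distinct colors and all ``new'' comparabilities involve some $\gamma_\rho$ with a freshly chosen $c^q(\gamma_\rho)$. The remaining clauses (closure under intersection of $u^q$, finiteness, height matching) are straightforward from the construction and the fact that the process of adding intersections terminates in finitely many steps.
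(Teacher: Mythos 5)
Your overall strategy is the right one (truncate, thin to a stationary set, apply Claim \ref{BMRlemma} to the $T$-sides, then amalgamate by closing $u^{p_\varepsilon}\cup u^{p_\zeta}$ under intersections and giving the new intersection points fresh images and fresh colours), but it has a genuine gap exactly at the point where this forcing differs from $\mathbb Q(T)$. Since $f^p$ is not injective, assigning a \emph{fresh} ordinal $\gamma_\rho$ to a new intersection point $\sigma=\eta\cap_T\eta'$ is not always legitimate: if some $\rho'\in u^{p_\varepsilon}$ with $f^{p_\varepsilon}(\rho')=f^{p_\varepsilon}(\eta)$ already has an initial segment $\sigma'\in u^{p_\varepsilon}$ with $\lg(\sigma')=\lg(\sigma)$, then $f^q(\sigma)$ is \emph{forced} to equal $f^{p_\varepsilon}(\sigma')$ (the image of $\eta$ can have only one restriction to that level, cf.\ the ``main point'' in the proof of Claim \ref{domain}); and if the symmetric situation occurs on the $\zeta$-side with $f^{p_\varepsilon}(\sigma')\neq f^{p_\zeta}(\sigma'')$, no admissible value for $f^q(\sigma)$ exists at all. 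This is the ``dangerous configuration'' and your construction never rules it out. The paper's proof excludes it by a preliminary strengthening that your proposal omits: before thinning, each $p_\zeta$ is extended so that for every $\rho\in u^{p_\zeta}$ and every length $\beta$ occupied by some element of $u^{p_\zeta}$, the restriction $\rho\rest\beta$ already lies in $u^{p_\zeta}$ (condition (b)). With this closure, any genuinely new intersection point sits at a level occupied by neither condition, and only then is the ``fresh ordinal, fresh colour'' move sound. Without an analogue of (b), the step ``the remaining clauses are straightforward'' is precisely where the argument can fail.

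A second, repairable, problem is the thinning step. You cannot use Fodor to fix the truncation $q_\zeta=p_\zeta\rest\zeta$ outright: the set $u^{q_\zeta}$ consists of nodes of $T\subseteq{}^{\omega_1>}\omega_1$, and since $T$ is wide its levels may be uncountable and the values of these finite sequences need not be below $\zeta$, so $\zeta\mapsto q_\zeta$ is not regressively coded and need not be constant on any stationary set. The paper instead fixes only the $v$-side data (which does live below $\zeta$), takes a $\Delta$-system with root $u^\ast$ on the sets $u^{q_\zeta}$, and carries along level- and intersection-preserving isomorphisms $\varphi_{\varepsilon,\zeta}$, $\psi_{\varepsilon,\zeta}$, $i_{\varepsilon,\zeta}$ compatible with the $f$'s and $c$'s; your later ``isomorphism of the combinatorial data over $q^\ast$'' should be formulated over this root instead, and the club $C$ should also bound the lengths and the values of the nodes appearing in earlier conditions, which is used when analysing where cross-intersections can occur. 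These changes are routine; the missing treatment of the dangerous configuration via condition (b) is the substantive gap.
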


\begin{proof} Recalling that the elements $\rho$ of $T$ are functions from a countable ordinal to $\omega_1$, we shall use the notation $\rho\rest\alpha$ to denote the restriction of $\rho$ to $\lg(\rho)\cap\alpha=\max\{\lg(\rho), \alpha\}$ 
and $\rho(\beta)$ for the value of $\rho$ at $\beta\in\lg(\rho)$. Also observe that $\lg(\rho)=\dom(\rho)$.

Suppose that $\langle p_\zeta:\,\zeta <\omega_1\rangle$ is a given sequence of elements of
$\mathbb P(T)$. By extending each $p_\zeta$ if necessary, using the density of the sets $\EE_\rho$ from Claim 
\ref{domain}, we can assume that for each $\zeta<\omega_1$:
\begin{description}
\item[(a)] 
there is
an element of $u^{p_\zeta}$ and hence of  $v^{p_\zeta}$ of height $\zeta$, and that
\item[(b)]\label{abb} for every $\rho\in 
u^{p_\zeta}$ and every $\beta<\lg(\rho)$ such that there is an element of $u^{p_\zeta}$ of height $\beta$,
the point $\rho\rest\beta$ is in $u^{p_\zeta}$. 
\end{description}
Let 
\[
C=\big\{\zeta <\omega_1:\, \omega\zeta=\zeta\mbox{ and } \max\{ \lg(\rho), \rho(\alpha): \, \rho \in \bigcup_{\varepsilon <\zeta } u^{p_\varepsilon}, \alpha< \lg(\rho) \} \ < \zeta\big\},
\]
so $C$ is a club of $\omega_1$ consisting of limit ordinals. By extending again if necessary, we shall require that for every
$\zeta\in C$, there is an element in $u^{p_\zeta}$ of height in $(0,\zeta)$.
For $\zeta\in C$ let us define $q_\zeta=p_\zeta\rest \zeta$, by which we mean:
\begin{enumerate}
\item $u^{q_\zeta}=u^{p_\zeta}\cap {}^{<\zeta}\omega_1$, $v^{q_\zeta}=v^{p_\zeta}\cap \zeta$, 
\item $<_{q_\zeta}=<_{p_\zeta}\rest v^{q_\zeta}$ and
\item $f^{q^\zeta}=f^{p^\zeta}\rest u^{q_\zeta}$, $c^{q_\zeta}=c^{p_\zeta} \rest v^{q_\zeta}$.
\end{enumerate}
Applying the Fodor Lemma and the Delta-System Lemma, we obtain a stationary set $S\subseteq C$ such that: 
\begin{enumerate}
\item  for every 
$\zeta\in S$ we have:
$v^{q_\zeta}=v^\ast$, $<_{q_\zeta}=<^\ast$, $c^{q_\zeta}=c^\ast$ are fixed,
\item the sets $u^{q_\zeta}$ form a $\Delta$-system with root $u^\ast$,
\item for every $\varepsilon< \zeta\in S$ there is a level-preserving order isomorphism $\varphi_{\varepsilon,\zeta}$
from $u^{q_\varepsilon}$ to $u^{q_\zeta}$ which is identity on $u^\ast$,\footnote{Since $u^{p_\varepsilon}$ and $u^{p_\zeta}$ are closed under intersections, $\varphi_{\varepsilon,\zeta}$ necessarily preserves intersections.}
\item for every $\varepsilon< \zeta\in S$, $f^{q_\varepsilon}=f^{q_\zeta}\circ \varphi_{\varepsilon,\zeta}$,
\item for every $\varepsilon <\zeta\in S$, there is an order preserving isomorphism $\psi_{\varepsilon,\zeta}$ from $(u^{p_\varepsilon}, \subseteq)$ to $(u^{p_\zeta}, \subseteq)$ which extends 
$\varphi_{\varepsilon,\zeta}$ and such that $f^{p_\epsilon}=f^{p_\zeta}\circ \psi_{\varepsilon,\zeta}$,
\item for every $\varepsilon< \zeta\in S$, there is an order preserving isomorphism $i_{\varepsilon,\zeta}$ from 
$(v^{p_\varepsilon}, <_{p_\varepsilon})$ to $(v^{p_\zeta}, <_{q_\zeta})$ which is identity on $v^\ast$.
\end{enumerate}

By the fact that there is an element of height $\zeta$ in $u^{p_\zeta}$, we have that each $u^{p_\zeta}\setminus u^{q_\zeta}\neq\emptyset$. 
Since $\langle\rangle\in u^{q_\zeta}$ we have that $u^{q_\zeta}\neq \emptyset$ for all $\zeta$, but even more so,
$u^{q_\zeta}$ has an element of height in $(0, \zeta)$. Let
$\alpha_1=\max\{\lg(\rho):\,\rho\in u^{q_\zeta}\}$ and $\alpha_0=\min\{\lg(\rho):\,\rho\neq \langle\rangle \in u^{q_\zeta}\}$. 
Since $\zeta$ is an element of $C$, it is a limit ordinal. The set $u^{q_\zeta}$ is finite set and for every $\rho\in u^{q_\zeta}$
the length $\lg(rho)<\zeta$, so we have that $\alpha_1<\zeta$.
By the choice of $\varphi_{\varepsilon,\zeta}$, the choice of $\alpha_0$ and $\alpha_1$ does not depend on $\zeta$.
Finally let $\delta=\min (C)\setminus \alpha_1$.

Our requirements and the fact that $T$ does not have an uncountable branch imply that we can use Claim \ref{BMRlemma} to find $\varepsilon< \zeta\in S\setminus \delta$ such that for every 
$\rho\in u^{p_\varepsilon}\setminus u^\ast$ and  $\sigma \in u^{p_\zeta}\setminus u^\ast$, 
$\rho$ and $\sigma$ are incomparable. We shall find a common extension of $p_\varepsilon$ and $p_\zeta$.

We first define $u_0=u^{p_\varepsilon} \cup u^{p_\zeta}$. We also define $f_0=f^{p_\varepsilon}\cup f^{p_\zeta}$, which is well defined by the assumptions of the $\Delta$-system, and, similarly, $c_0=c^{p_\varepsilon}\cup c^{p_\zeta}$. We also simply let $<_0=<_{p_\varepsilon}\cup <_{p_\zeta}$, which still gives a partial order by the choice of $\varepsilon$ and $\zeta$. Specifically, $<_0$ makes any element of $v^{p_\varepsilon}$ incomparable to any element of and $v^{p_\zeta}$, which conforms to the fact that any element of $u^{p_\varepsilon}\setminus u^\ast$ is incomparable to any element of $u^{p_\zeta}\setminus u^\ast$.

The only problem is that $u_0$ is not necessarily closed under intersections. Let us analyse what type of intersection can occur and what we need to add to make
$u_0$ closed under
intersections.

Let $\rho,\tau\in u_0$. If 
$\rho,\tau\in u^{p_\varepsilon}$ or $\rho,\tau\in u^{p_\zeta}$ then $\rho\cap\tau \in u^\ast$. Let us now suppose that 
we are dealing with some $\rho\in u^{p_\varepsilon}\setminus u^{p_\zeta}$ and $\tau\in u^{p_\zeta}\setminus u^{p_\varepsilon}$, the other case is symmetric. 

\smallskip

{\noindent\underline{Case 1.}} $\lg(\rho\cap \tau) <\alpha_0$. We handle all instances of such $\rho$ and $\tau$ simultaneously.


Using that $\rho\rest \alpha_0\in u^{p_\varepsilon}$
and $\tau\rest \alpha_0\in u^{p_\zeta}$, it suffices to consider the case $\lg(\rho)=\lg(\tau)=\alpha_0$.

Let $\sigma_0, \ldots, \sigma_n$ be all $\sigma=\rho\cap\tau$ obtained in this way. 
We choose for each $i< n+1$ distinct $f(\sigma_i)$ with $\hgt(f(\sigma_i))=\lg(\sigma_i)$ (note that necessarily  $f(\sigma_i) \in \omega_1\setminus \ran(f_0)$) and distinct $c_i$ in
$\omega\setminus \ran(c_0)$. Extend $u_0$ by adding all $\sigma_0, \ldots, \sigma_n$ and $v_0$
by adding all $f(\sigma_i)$. Extend $<_0$ to a transitive order on $v_0$ which satisfies $f(\sigma_i)<_0
f(\eta)$ when $\sigma_i\subset \eta$ for some $\rho\in u^{p_\varepsilon}\cup u^{p_\zeta}$. This is possible because
there are no elements of $u_0$ of length $<\alpha_0$.
Extend $c_0$ to include the values $c_i=c(f(\sigma_i))$ as above. Call the resulting tuple
$(u_1, v_1, <_1, f_1, c_1)$.

\smallskip

{\noindent\underline{Case 2.}} $\lg(\rho\cap \tau) \in [\alpha_0, \alpha_1)$. We handle all instances of such $\rho$ and $\tau$ simultaneously.

Let 
$\sigma=\rho\cap \tau$.
By our assumption {\bf (b)} we can assume that 
$\rho\in u^{q_\varepsilon} \setminus u^\ast$ and $\tau \in u^{q_\zeta}\setminus u^\ast$ are of the least possible length with the intersection $\sigma$.  By the fact that $\varphi_{\varepsilon, \zeta}$ preserves both order and height, another application of {\bf (b)} lets us assume that $\hgt(\rho)=\hgt(\tau)$.
The possible {\bf dangerous configuration} is that there are $\rho'\in 
u^{q_\varepsilon} \setminus u^\ast$, $\tau'\in 
u^{q_\zeta} \setminus u^\ast$ of length $\lg(\rho)$ and $\sigma'\in u^{q_\varepsilon} \setminus u^\ast$,
$\sigma''\in u^{q_\zeta} \setminus u^\ast$ of length $\lg(\sigma)$ such that $\sigma'\subset \rho'$ 
and $\sigma''\subset\tau'$, $f^{p_\varepsilon}(\rho')=f^{p_\varepsilon}(\rho)=f^{p_\zeta}(\tau)=
f^{p_\zeta}(\tau')$, yet  $f^{p_\varepsilon}(\sigma')\neq f^{p_\zeta}(\sigma'')$. 

\

\begin{center}
\begin{tikzpicture}
\node at (4.25,0) {Dangerous configuration}; 
\draw[line width=0.5, -] (2,0.5) -- (6,0.5);
\draw[line width=0.5, -] (2,4) -- (6,4);
\node at (6.3,0.5) {$\alpha_0$}; 
\node at (6.3,4) {$\alpha_1$}; 
\node at (0.5,1.5) {\scriptsize $\bullet$}; 
\node at (3,1.5) {\scriptsize $\bullet$}; 
\node at (5,1.5) {\scriptsize $\bullet$}; 
\node at (0,3.5) {\scriptsize $\bullet$}; 
\node at (1.5,3.5) {\scriptsize $\bullet$}; 
\node at (4,3.5) {\scriptsize $\bullet$}; 
\node at (5.5,3.5) {\scriptsize $\bullet$}; 
\draw[line width=0.5, -] (0.5,1.5) -- (0,3.5);
\draw[line width=0.5, -] (3,1.5) -- (1.5,3.5);
\draw[line width=0.5, -] (3,1.5) -- (4,3.5);
\draw[line width=0.5, -] (5,1.5) -- (5.5,3.5);
\node at (0.5,1.2) {$\sigma'$}; 
\node at (3,1.2) {$\sigma$}; 
\node at (5,1.2) {$\sigma''$}; 
\node at (0.3,3.5) {$\rho'$}; 
\node at (1.8,3.5) {$\rho$}; 
\node at (4.3,3.5) {$\tau$}; 
\node at (5.8,3.5) {$\tau'$}; 
\draw[line width=0.1, -] (7,1) -- (7,3.7);
\node at (9,1.5) {\small $f^{p_\varepsilon}(\sigma') \neq f^{p_\zeta}(\sigma'')$}; 
\node at (9,3.5) {\small $f^{p_\varepsilon}(\rho') = f^{p_\varepsilon}(\rho)$}; 
\node at (9,3) {\small $= f^{p_\zeta}(\tau)= f^{p_\zeta}(\tau')$}; 
\end{tikzpicture}
\end{center}

If there were such points we
would not be able to extend $f_1$ to $\sigma$ and keep it a weak embedding. Luckily, this cannot happen since
if there were to be any elements $\eta$ of $u^{q_\varepsilon}$ of length $\lg(\sigma)$, then by the fact that 
$u^{p_\varepsilon}$ satisfies the assumption {\bf (b)}, $\sigma=\rho\rest\lg(\eta)$
would already be $u^{p_\varepsilon}$, so in $u^{q_\varepsilon}$.

This analysis shows that we can proceed as in Case 1 to extend $(u_1, v_1, <_1, f_1, c_1)$ to $(u_2, v_2, <_2, f_2, c_2)$ which is closed under all intersections of Case 2 and satisfies other requirements of being a condition. Note that $(u_2, v_2, <_2, f_2, c_2)$ remains closed under the intersections of length $<\alpha_0$.

{\noindent\underline{Case 3.}} $\lg(\rho\cap \tau) =\alpha_1$.

Let 
$\sigma=\rho\cap \tau$.
We have that $\sigma=\rho\rest\alpha_1\in u^{p_\varepsilon}$ and $\sigma=\tau\rest\alpha_1\in u^{p_\zeta}$
and hence $\sigma\in u^\ast$, a contradiction.

{\noindent\underline{Case 4.}} $\lg(\rho\cap \tau) >\alpha_1$.

Let 
$\sigma=\rho\cap \tau$.
By the choice of $S$, we have that $u^{p_\zeta}$ does not have any elements of length $\lg(\sigma)$
and by the fact that $u^{p_\varepsilon}$ is closed under restrictions, since $\sigma=\rho\rest \lg(\sigma)$,
we have that there are no elements of $u^{p_\varepsilon}$ of length $\lg(\sigma)$ either. Hence we can proceed like in Case 1. 
Once we are done closing under intersections of this type, we finally obtain 
a common extension of $p_\varepsilon, p_\zeta$.
\end{proof}

\begin{proof} (of Theorem \ref{wideintonormal}) The proof follows by putting the lemmas together.
\end{proof}

\section{Conclusion}\label{concl} Putting the results of Section \S\ref{sec:Aron} and Section \S\ref{wtn} together, we obtain our
main theorem, as follows.

\begin{theorem}\label{main} Under $MA(\omega_1)$, there is no wide Aronszajn tree universal under weak embeddings.
\end{theorem}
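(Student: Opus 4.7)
The plan is to derive Theorem \ref{main} as a direct corollary of the two main results already established, namely Theorem \ref{nounivAron} (no universal Aronszajn tree under $MA(\omega_1)$) and Theorem \ref{wideintonormal} (under $MA(\omega_1)$, $\mathcal A$ is cofinal in $(\TT, \le)$). The strategy is a simple transitivity argument, so there is no serious obstacle to overcome at this stage; all the genuine work has been done in Sections \S\ref{sec:Aron} and \S\ref{wtn}.

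First I would assume for contradiction that some $T \in \TT$ is universal in $(\TT, \le)$, meaning that $S \le T$ for every $S \in \TT$. Next I would invoke Theorem \ref{wideintonormal} to produce an Aronszajn tree $T^{\ast} \in \mathcal A$ together with a weak embedding $T \le T^{\ast}$. Then, given any Aronszajn tree $S \in \mathcal A$, the inclusion $\mathcal A \subseteq \TT$ together with the assumed universality of $T$ yields $S \le T$, and composing with $T \le T^{\ast}$ gives $S \le T^{\ast}$ by the evident transitivity of the weak embedding relation (composition of two weak embeddings is a weak embedding).

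Therefore $T^{\ast}$ would be a universal element of $(\mathcal A, \le)$, directly contradicting Theorem \ref{nounivAron}. The contradiction shows that no such $T$ can exist, finishing the proof. The only conceptual point worth flagging is that we use both that $\mathcal A \subseteq \TT$ (so a universal in $\TT$ automatically dominates every Aronszajn tree) and that $\le$ is transitive, both of which are immediate from Definition \ref{}; no additional forcing or combinatorial argument is required beyond what has already been carried out in the two component theorems.
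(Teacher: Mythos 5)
Your proposal is correct and coincides with the paper's own argument: both take a hypothetical universal $T\in\TT$, use Theorem \ref{wideintonormal} to find an Aronszajn tree above it, and conclude by transitivity that this Aronszajn tree would be universal in $(\mathcal A,\le)$, contradicting Theorem \ref{nounivAron}. No gaps; the explicit mention of transitivity of $\le$ and of $\mathcal A\subseteq\TT$ is exactly what the paper's shorter write-up implicitly uses.
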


\begin{proof} Assume $MA(\omega_1)$ and suppose for a contradiction that $T$ is a universal element in
$(\TT,\le)$. By Theorem \ref{wideintonormal}, there is an Aronszajn tree $T'$ such that $T\le T'$, so $T'$ is
universal in $(\TT,\le)$ and so in  $(\mathcal A,\le)$. However, by Theorem \ref{nounivAron} $(\mathcal A,\le)$ does not have a universal element, a contradiction.
\end{proof}

We also remark that putting our results together with the results of Todor{\v c}evi\'c mentioned in \S\ref{}, gives the first part of the following Corollary \ref{improve}. The second part of the corollary improves Todor{\v c}evi\'c's theorem \ref{Toto} and our Main Theorem \ref{main}.

\begin{corollary}\label{improve} Assume $MA(\omega_1)$. Then:

{\noindent (1)} The class $\LL$ of Lipschitz trees and the class of 
coherent trees are cofinal in the class $(\TT,\le)$.

{\noindent (2)} There is no element of $(\TT,\le)$ which suffices to weakly embed all Aronszajn trees
\end{corollary}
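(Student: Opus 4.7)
The proof will be a short deduction combining our two main theorems with known results of Todor\v{c}evi\'c. The plan is to establish both parts by straightforward transitivity arguments, once the right ingredients are assembled, so no new combinatorial work is needed.

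For part (1), the strategy is to chain three embedding results. Start with an arbitrary $T \in \TT$. By Theorem \ref{wideintonormal}, under $MA(\omega_1)$ there is an Aronszajn tree $T'$ with $T \le T'$. Now invoke Todor\v{c}evi\'c's Lemma 4.3.32 of \cite{stevo2007walks}: under $MA(\omega_1)$ every Aronszajn tree weakly embeds into a coherent tree, so there is a coherent tree $T''$ with $T' \le T''$. Finally, Todor\v{c}evi\'c's result that under $MA(\omega_1)$ every coherent tree is Lipschitz means $T'' \in \LL$. By transitivity of $\le$, we have $T \le T''$, showing both that the coherent trees and the Lipschitz trees are cofinal in $(\TT, \le)$.

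For part (2), I will argue by contradiction. Suppose there is some $T \in \TT$ which weakly embeds every Aronszajn tree. By Theorem \ref{wideintonormal}, there exists an Aronszajn tree $T'$ with $T \le T'$. For any Aronszajn tree $S$, the assumption gives $S \le T$, and hence $S \le T'$ by transitivity of the weak embedding relation. Thus $T'$ itself would be an Aronszajn tree weakly embedding every Aronszajn tree, i.e.\ a universal element of $(\mathcal{A}, \le)$. But this contradicts Theorem \ref{nounivAron} (equivalently Todor\v{c}evi\'c's Theorem \ref{Toto}).

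There is essentially no genuine obstacle here: both parts are purely formal once Theorems \ref{nounivAron} and \ref{wideintonormal} are in hand, together with the cited statements from \cite{stevo2007walks}. The only point worth flagging explicitly is that part (2) strictly strengthens Theorem \ref{main}, since it rules out not just a universal element in $(\TT,\le)$ but even a single member of $\TT$ that absorbs all Aronszajn trees under weak embedding.
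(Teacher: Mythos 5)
Your proposal is correct and follows essentially the same route as the paper: part (1) is the composition of Theorem \ref{wideintonormal} with Todor{\v c}evi{\'c}'s results that coherent trees are cofinal among Aronszajn trees and are Lipschitz under $MA(\omega_1)$, and part (2) is the transitivity argument reducing to the non-existence of a universal Aronszajn tree. If anything, your write-up is slightly more explicit than the paper's (which cites Theorem \ref{main} where Theorem \ref{wideintonormal} is meant, and leaves the appeal to Theorem \ref{nounivAron} implicit in part (2)), but the argument is the same.
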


\begin{proof} (1) Our Main Theorem \ref{main} shows that under 
$MA(\omega_1)$, the class or Aronszajn trees is cofinal in the class of wide Aronszajn trees. On the other hand, 
Todor{\v c}evi\'c in \S4 of \cite{stevo2007walks} has shown that under the same assumptions, the class of coherent trees
is cofinal in the class of all Aronszajn trees and that every coherent tree is Lipschitz.

{\noindent (2)} This is a direct consequence of Theorem \ref{wideintonormal}.
\end{proof}

\bibliographystyle{plain}
\bibliography{../bibliomaster}

\end{document}